\documentclass[a4paper]{article}

\usepackage[english]{babel}
\usepackage[utf8]{inputenc}
\usepackage{amsmath}
\usepackage{amssymb}
\usepackage{amsthm}
\usepackage{amscd}
\usepackage{amsfonts}
\usepackage{newlfont}
\usepackage{enumerate}

\newtheorem{thm}{Theorem}[section]
\newtheorem{example}[thm]{Example}
\newtheorem{lem}[thm]{Lemma}
\newtheorem{prop}[thm]{Proposition}
\newtheorem{definition}[thm]{Definition}
\newtheorem{rem}[thm]{Remark}
\newtheorem{corol}[thm]{Corollary}

\title{On strong tangential transversality \thanks{This work was partially supported by
		the Sofia University "St. Kliment Ohridski" fund "Research \& Development"
		under contract 80-10-133/25.04.2018  and by the Bulgarian National Scientific Fund under Grant DNTS/Russia 01/9/23.06.2017.}}

\author{Mira Bivas\footnotemark[2]\ \footnotemark[3]\ \footnotemark[4]
	\and Mikhail Krastanov\footnotemark[2]\ \footnotemark[3]\ \footnotemark[5]
	\and Nadezhda Ribarska\footnotemark[2]\ \footnotemark[3]\ \footnotemark[6]
}

\date{}

\begin{document}
	
	\maketitle
	
	\section{Introduction}
	
	Transversality is a classical concept of mathematical analysis and differential topology. Recently, it has proven to be useful in variational analysis as well. As it is stated in \cite{Ioffe}, the transversality-oriented language is extremely natural and convenient in some parts of variational analysis, including subdifferential calculus and nonsmooth optimization.
	
	We arrived to the study of transversality of sets when investigating Pontryagin's type maximum principle for  optimal control problems with terminal
	constraints in infinite dimensional state space. In order to prove a nonseparation result (if one can not separate the approximating cones of two closed sets  at a common point  and, moreover, the cones have nontrivial intersection, then the sets can not be separated as well) we introduced the notion of uniform tangent set (cf. \cite{KR17}). It happened to be very useful for obtaining necessary conditions for  optimal control problems in infinite dimensional state space, because the diffuse variations (which are naturally defined and easy to calculate) form a uniform tangent set to the reachable set of a control system. The present manuscript and \cite{BKRtt} are  an effort to understand the relation of our study to the established results and methods of nonsmooth optimization. As we arrived to some known notions and results using our approach, the proofs of the known theorems are completely different from the classical ones and, moreover, we found some new and surprising results. Our proofs do not use variational principles and we are concentrated mainly on tangential conditions in the primal space. 
	
	Here we introduce the notion of \textit{strong tangential transversality} in Banach spaces. It is characterized in the primal space -- its formulation involves uniform tangent sets. It is a convenient sufficient condition for tangential transversality (cf. \cite{BKRtt}).  Unlike it, strong tangential transversality is appropriate for obtaining results not only using tangents, but also using normals. Since uniform tangent sets are closely connected to the Clarke subdiffrential (cf. \cite{KR17} and \cite{BRV17}), the main results concern the Clarke tangent and normal cones. To be more precise, strong tangential transversality implies an intersection property
	for the Clarke tangent and normal cones and a rather general sum rule for the Clarke
	subdifferential.
	
	For the sake of completeness, we remind the definition of tangential and normal intersection property from \cite{BKRtt}. Let $T_C(x)$ be the tangent cone (in some sense -- Bouligand, derivable, Clarke, ...) to the closed subset $C$ of the Banach space $X$ at $x\in C$ and $N_C(x)$ be the normal cone (in some sense -- proximal, limiting, $G$-normal, Clarke, ...) to the closed set $C\subset X$ at $x\in C$.
	\begin{definition}
		Let $A$ and $B$ be closed subsets of the Banach space $X$ and let $x_0$ belong to $A\cap B$.	
		We say that $A$ and $B$ have tangential intersection property at $x_0$ with respect to the type(s) of the approximating cones $T_A(x_0)$ and $T_B(x_0)$ if
		$$T_{A\cap B}(x_0) \supset T_A(x_0)\cap T_B(x_0) \, . $$
		
		We say that $A$ and $B$ have normal intersection property at $x_0$ with respect to the type(s) of the normal cones $N_A(x_0)$ and $N_B(x_0)$ if
		$$N_{A\cap B}(x_0) \subset N_A(x_0) + N_B(x_0)$$
		and the right-hand side of the above inclusion is weak$^*$-closed.
	\end{definition}
	
	In the next paragraphs we provide a short history of the development of the calculus for the Clarke subdifferential. Using our approach, we are able not only to generalize these results, but also to prove them in a unified way.
	
	In 1973, Clarke \cite{Clarke73} introduces concepts of subgradient and subdifferential for non-convex extended real-valued functions, now known as generalized gradient and Clarke subdifferential. They were initially introduced for Lipschitz functions in Banach spaces and in \cite{Clarke73} calculus for such functions is developed as well. Clarke's results also provide a direct formula for tangent and normal cones in finite-dimensional spaces, further studied and developed in Banach spaces in \cite{Rock79rn} and \cite{HU79}. 
	
	Rockafellar extends Clarke's generalized gradient and subdifferential for arbitrary functions in Banach spaces in \cite{Rock80}. In the same paper he introduces  directionally Lipschitz functions and the hypertangent cone. In \cite{Rock79dl} Rockafellar proves sum rule for the Clarke subfifferential in Banach spaces if one of the functions is directionally Lipschitz and tangent and normal intersection properties for the Clarke tangent and normal cones, if one of the sets is epi-Lipschitzian (introduced in \cite{Rock79rn}, see also \cite{Rock80}). The hypotheses in these result can be weakened, if the space is assumed to be finite-dimensional, which is done in \cite{WBor87}. In finite dimensions, Aubin and Ekeland \cite{AuEke84} also obtain a normal intersection property for the Clarke normal cone, if the difference of the corresponding Clarke tangent cones is the whole space.
	
	In 1985 Borwein and Strojwas \cite{BS85} introduce the class of compactly epi-Lipschitz sets as appropriate for investigating tangential approximations of the Clarke tangent cone in Banach spaces. Then, Borwein \cite{Borw87} introduces epi-Lipschitz-like sets and shows that for them the Clarke normal cone  adequately measures boundary behaviour (i.e. the Clarke normal cone is weak-star locally compact). In \cite{BS89} Borwein and Strojwas generalize the tangential intersection property of Rockafellar \cite{Rock80}, again if one of the sets is epi-Lipschitzian.
	
	Clarke and Raissi \cite{Clarke90} prove a tangential intersection formula for the Clarke tangent cone in Hilbert spaces and provide a sufficient condition for the corresponding normal intersection formula in a remark. In the proof considerably is used the fact that in Hilbert spaces the Clarke normal cone is the closed convex hull of the proximal  normal cone. This approach is extended to Banach spaces in \cite{Jou95}, where Ioffe's $G$-normal cone is used instead of the proximal one. Jourani and Thibault \cite{JouThi95} prove normal intersection property for the Clarke normal cones and a sum rule for the Clarke subdifferential, using the corresponding results for Ioffe's $G$-normal cone and geometric approximate subdifferential.
	
	Ciligot-Travain \cite{CiTr99} proves a intersection formula for the normal cone associated with the hypertangent cone without using directionally Lipschitz assumptions.
	
	As it is seen from above, sufficient conditions for normal intersection property for the Clarke normal cone (sum rule for the Clarke subdifferential) involve strong assumptions on one of the sets (epigraphs) - existence of a hypertangent, epi-Lipschitz-like (directionally Lipschitz, Lipschitz-like). We prove these results imposing transversality-type assumption on the sets involved. This has already been done for the limiting and $G$-normal cones (subdifferentials) -- see \cite{Penot} and \cite{IoffeBook}, but it has not been known for the Clarke normal cone (subdifferential).
	
	The paper is organized as follows: the definition of a uniform tangent set is given in the second section and some known properties of such sets are summarized. Moreover, it is proven that for each epi-Lipschitzian   set at the reference point there exists an open   uniform tangent set generating the corresponding Clarke tangent cone. The notion of strong tangential transversality of two sets is introduced in the third section and its relation to tangential transversality (cf. \cite{BKRtt}) is clarified. A tangential intersection property (for uniform tangent sets) and normal intersection properties (for Clarke normal cones) are proven. An example illuminating the limits of validity of the Clarke normal intersection property is given. In the last section a sum rule for the Clarke subdifferential is presented.
	
	Throughout the paper if $Y$ is a Banach space, we will denote by ${\mbox{\bf B}_Y}$ [$ \bar{\mbox{\bf{B}}}_Y$] its open [closed] unit ball, centered at the origin. The index could be omitted if there is no ambiguity about the space.
	
	\textbf{Acknowledgments.} We are grateful to Prof. S. Troyanski and Prof. D. Kutzarova for their helpful suggestions concerning Example \ref{CEL}.
	
	\section{Uniform tangent sets}
	
	The following definitions are from \cite{KR17}:
	
	\begin{definition} \label{UnCone}
		Let   $S$ be a  closed subset of $X$ and $x_0$ belong to $S$. We
		say that the bounded  set  $D_S(x_0)$ is a uniform  tangent  set to $S$ at
		the point $x_0$ if for each $\varepsilon >0$ there exists
		$\delta>0$ such that for each $v \in D_S(x_0)$
		and for each point $x \in S \cap (x_0 + \delta \bar {\mbox{\bf
				B}})$ one can find $\lambda>0$   for which      $S \cap (x+t (v + \varepsilon \bar {\mbox{\bf
				B}}))$ is non empty  for each $t \in
		[0,\lambda]$.
	\end{definition}
	
	\begin{definition} \label{UnConeSequences}
		Let   $S$ be a closed subset of $X$ and $x_0$ belong to $S$. We
		say that the bounded set  $D_S(x_0)$ is a sequence  uniform tangent
		set to $S$ at the point $x_0$ if for each $\varepsilon >0$ there
		exists $\delta>0$ such that for each $v \in D_S(x_0)$ and for each point $x \in S \cap (x_0 + \delta \bar
		{\mbox{\bf B}})$ one can find a sequence of positive reals
		$t_m\to 0$  for which $S
		\cap (x+t_m (v + \varepsilon \bar {\mbox{\bf B}}))$ is non empty
		for each positive integer $m$.
	\end{definition}
	
	The next theorem is the main result from \cite{BRV17}.
	
	\begin{thm}\label{equiv_def}
		Let   $S$ be a closed subset of $X$ and $x_0$ belong to $S$. The following are equivalent
		\begin{enumerate}[(i)]
			\item $D_S(x_0)$ is a uniform tangent set to $S$ at the point $x_0$
			\item $D_S(x_0)$ is a sequence  uniform tangent set to $S$ at the point $x_0$
			\item for each $\varepsilon >0$ there exist
			$\delta>0$  and $\lambda>0$ such that for each $v \in D_S(x_0)$
			and for each point $x \in S \cap (x_0 + \delta \bar {\mbox{\bf
					B}})$ the set $S \cap (x+t (v + \varepsilon \bar {\mbox{\bf
					B}}))$ is non empty  for each $t \in
			[0,\lambda]$.
		\end{enumerate}
	\end{thm}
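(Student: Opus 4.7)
The three implications close into a cycle, and two of them are immediate. $(iii) \Rightarrow (i)$ reads directly from the definitions: the uniform $\lambda$ from (iii) witnesses the existence clause of (i) for every fixed pair $(v, x)$. Equally, $(i) \Rightarrow (ii)$ follows by taking $t_m := \lambda/m$, where $\lambda$ is produced by (i).

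The real content lies in $(ii) \Rightarrow (iii)$, and that is where I expect the main obstacle: one must upgrade a pointwise ``sequence of times tending to $0$'' statement to a single uniform interval $[0, \lambda]$ working for every $(v, x)$. The plan is a Zorn's lemma argument in which (ii) is applied iteratively along a transfinite chain, the Banach-space completeness of $X$ producing limit points inside $S$ at limit stages.

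Concretely, fix $\varepsilon > 0$ and apply (ii) with $\varepsilon/2$ to obtain a radius $\delta_1 > 0$. Let $M := \sup_{v \in D_S(x_0)} \|v\| < \infty$, and choose $\delta, \lambda > 0$ so small that $\delta + \lambda(M + \varepsilon/2) \le \delta_1$. Fix $v \in D_S(x_0)$, $x \in S \cap (x_0 + \delta \bar{\mbox{\bf B}})$, and $t \in (0, \lambda]$, and order the set
\[
\mathcal{P} := \{(T, y) : T \in [0,t],\; y \in S,\; \|y - x - Tv\| \le T\varepsilon/2\}
\]
by $(T_1, y_1) \preceq (T_2, y_2) \iff T_1 \le T_2$ and $\|y_2 - y_1 - (T_2 - T_1)v\| \le (T_2 - T_1)\varepsilon/2$. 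The pair $(0, x)$ lies in $\mathcal{P}$. Along a chain, the order relation gives $\|y_\beta - y_\alpha\| \le (T_\beta - T_\alpha)(M + \varepsilon/2)$ whenever $T_\alpha \le T_\beta$, so the $y_\alpha$'s along any increasing sequence of $T_\alpha$'s are Cauchy; completeness of $X$ and closedness of $S$ then deliver a limit $y^* \in S$, and $(T^*, y^*)$ with $T^* = \sup T_\alpha$ is easily checked to lie in $\mathcal{P}$ and dominate the chain.

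Zorn then yields a maximal $(\hat T, \hat y) \in \mathcal{P}$, and the crux is to rule out $\hat T < t$. If $\hat T < t$, the estimate $\|\hat y - x_0\| \le \delta + \hat T(M + \varepsilon/2) \le \delta_1$ lets (ii) be reapplied at $\hat y$, producing $\sigma_m \downarrow 0$ and $z_m \in S \cap (\hat y + \sigma_m(v + (\varepsilon/2)\bar{\mbox{\bf B}}))$. For $m$ large $\sigma_m < t - \hat T$, and the telescoping triangle inequality shows $(\hat T + \sigma_m, z_m) \in \mathcal{P}$ strictly dominates $(\hat T, \hat y)$, a contradiction. Hence $\hat T = t$, and $\hat y$ is the required witness since $\hat T\varepsilon/2 \le t\varepsilon$. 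The delicate point is the telescoping of the per-step $\varepsilon/2$-slack into a cumulative bound of $T\varepsilon/2$ along the chain; the extra margin relative to the $\varepsilon$ demanded by (iii) is precisely what keeps all Cauchy limits inside $\mathcal{P}$.
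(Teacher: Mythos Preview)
The paper does not supply its own proof of this theorem; it is quoted verbatim as the main result of \cite{BRV17}, so there is no in-paper argument to compare against. Your proposal is correct on its own merits. The implications $(iii)\Rightarrow(i)\Rightarrow(ii)$ are indeed immediate, and your Zorn construction for $(ii)\Rightarrow(iii)$ goes through: the relation $\preceq$ is a partial order by the triangle inequality; every chain has an upper bound because the $T$-coordinates lie in $[0,t]$ (so their supremum is approached by a countable monotone subsequence along which the $y$-coordinates are Cauchy, and closedness of $S$ captures the limit in $\mathcal{P}$); and the contradiction at a maximal $(\hat T,\hat y)$ with $\hat T<t$ is exactly as you describe, since the bound $\|\hat y-x_0\|\le\delta+\hat T(M+\varepsilon/2)\le\delta_1$ keeps $\hat y$ in the region where $(ii)$ applies. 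The half-$\varepsilon$ margin is precisely what lets the per-step errors telescope into the global bound $T\varepsilon/2$.

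This transfinite-iteration technique---building an approximate trajectory inside $S$ step by step and using completeness at limit ordinals---is the standard device for upgrading sequential tangency to uniform tangency and is closely related to the ``long orbit'' arguments used elsewhere by the same authors (cf.\ \cite{KR17}); it is almost certainly the route taken in \cite{BRV17} as well, though without that paper in hand a line-by-line comparison is not possible.
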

	
	The basic properties of uniform tangent sets are gathered in the next proposition.
	
	\begin{prop}\label{uts}
		Let $S$ be a closed subset of  $X$ and let
		$x_0\in S$. Let $D_S(x_0)$ be  an  uniform tangent set to $S$ at the point $x_0 $. Then, the following hold true:
		\begin{enumerate}[(i)]
			\item if $c>0$ is fixed,  $c \, D_S(x_0)$ is a uniform  tangent set    to $S$ at $x_0$
			\item if $D'_S(x_0) \subset D_S(x_0)$,  $D'_S(x_0)$ is a  uniform  tangent set    to $S$ at $x_0$
			\item if $D'_S(x_0)$ is  another  uniform  tangent set    to $S$ at $x_0$, then   $D_S(x_0) \cup D'_S(x_0)$ is a   uniform  tangent set   to $S$ at $x_0$
			\item the closure of $D_S(x_0)$ is a  uniform  tangent set    to $S$ at $x_0$
			\item $\overline{\mbox{co}} \, D_S(x_0)$ is a  uniform  tangent set    to $S$ at $x_0$
			\item if $S$ is convex, then $(S-x_0) \cap M \bar {\mbox{\bf B}} $ is a  uniform  tangent set    to $S$ at $x_0$ for every $M>0$.
		\end{enumerate}
		
	\end{prop}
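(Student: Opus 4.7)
The proof splits into six parts; most are direct bookkeeping with $\varepsilon$ and $\delta$, and I would first dispose of (i)--(iv), (vi) and then focus on (v), which is the only one requiring a substantive argument.

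For (i), I would simply rescale: given $\varepsilon>0$ apply the definition to $D_S(x_0)$ with $\varepsilon/c$ in place of $\varepsilon$, and rewrite $t(cv+\varepsilon\bar{\mbox{\bf B}})=(tc)(v+(\varepsilon/c)\bar{\mbox{\bf B}})$; the same $\delta$ works and the new $\lambda$ is $\lambda/c$. Part (ii) is immediate because the condition is universal over $v\in D_S(x_0)$. For (iii), given $\varepsilon>0$ take the two $\delta$'s from the definitions applied to $D_S(x_0)$ and $D'_S(x_0)$ and use the smaller of them. For (iv), given $\varepsilon>0$ apply the definition with $\varepsilon/2$ to get $\delta>0$; for any $v\in\overline{D_S(x_0)}$ pick $v'\in D_S(x_0)$ with $\|v-v'\|<\varepsilon/2$, and observe that $v'+(\varepsilon/2)\bar{\mbox{\bf B}}\subset v+\varepsilon\bar{\mbox{\bf B}}$, so the non-emptiness statement transfers. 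Part (vi) uses convexity directly: for $v=s-x_0\in(S-x_0)\cap M\bar{\mbox{\bf B}}$ and $x\in S\cap(x_0+\delta\bar{\mbox{\bf B}})$ with $\delta\leq\varepsilon$, set $y_t=x+t(s-x)\in S$ (convexity, $t\in[0,1]$); then $y_t-(x+tv)=t(x_0-x)$ has norm $\leq t\varepsilon$, so $y_t\in S\cap(x+t(v+\varepsilon\bar{\mbox{\bf B}}))$ and $\lambda=1$ works.

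The main obstacle is (v). By (iv) it suffices to show that $\mbox{co}\,D_S(x_0)$ is a uniform tangent set, and by a straightforward induction on the number of summands it is enough to treat a convex combination of two elements. So fix $v=\alpha v_1+(1-\alpha)v_2$ with $v_1,v_2\in D_S(x_0)$ and $\alpha\in[0,1]$. Given $\varepsilon>0$, I would invoke Theorem \ref{equiv_def}(iii) to get a single pair $(\delta_0,\lambda_0)$ that works \emph{simultaneously} for $D_S(x_0)$ with tolerance $\varepsilon$ and a shrinkage is then chosen to accommodate two steps: set $M=\sup_{u\in D_S(x_0)}\|u\|$ (finite by boundedness), pick $\delta\leq \delta_0/2$, and $\lambda\leq\lambda_0$ so small that $\lambda(M+\varepsilon)\leq\delta_0/2$.

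The two-step construction then goes as follows. Starting from $x\in S\cap(x_0+\delta\bar{\mbox{\bf B}})$, apply the uniform tangent property to $v_1$ with step $\alpha t$ (which lies in $[0,\lambda_0]$) to produce $y_1\in S$ with $y_1-x\in\alpha t(v_1+\varepsilon\bar{\mbox{\bf B}})$; the choice of $\lambda$ ensures $y_1\in S\cap(x_0+\delta_0\bar{\mbox{\bf B}})$, so the property applies \emph{again} at $y_1$, this time to $v_2$ with step $(1-\alpha)t$, producing $y_2\in S$ with $y_2-y_1\in(1-\alpha)t(v_2+\varepsilon\bar{\mbox{\bf B}})$. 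Adding the two displacements,
\[
y_2-x=t\bigl(\alpha v_1+(1-\alpha)v_2\bigr)+t\bigl(\alpha e_1+(1-\alpha)e_2\bigr)=tv+te,
\]
with $\|e\|\leq\alpha\varepsilon+(1-\alpha)\varepsilon=\varepsilon$, so $y_2\in S\cap(x+t(v+\varepsilon\bar{\mbox{\bf B}}))$. The inductive step for $n$ summands is identical, with the only subtlety being to shrink $\delta$ and $\lambda$ enough so that the intermediate iterates stay in $S\cap(x_0+\delta_0\bar{\mbox{\bf B}})$; here boundedness of $D_S(x_0)$ (hence of $\mbox{co}\,D_S(x_0)$) is used in an essential way to keep the total drift controlled uniformly in $v$.
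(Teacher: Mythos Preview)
Your argument is correct. The paper itself gives no substantive proof here: it simply notes that (i)--(iii) are immediate from the definition and defers (iv)--(v) to Lemma~2.5 of \cite{KR17} and (vi) to the proof of Corollary~2.8 of \cite{KR17}. You have supplied self-contained proofs of all six items, and your treatment of (i)--(iv) and (vi) is exactly the routine verification the paper alludes to.

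For (v), which is the only nontrivial part, your iterated one-step construction using Theorem~\ref{equiv_def}(iii) is the standard argument and is sound. One small remark on your exposition: the sentence ``by a straightforward induction on the number of summands it is enough to treat a convex combination of two elements'' is slightly misleading, since a naive induction on the statement ``$n$-fold convex combinations form a uniform tangent set'' would allow the parameters $\delta,\lambda$ to degrade with $n$ and would not yield a single $\delta$ valid over all of $\mathrm{co}\,D_S(x_0)$. What actually makes your proof work is precisely what you say at the end: the \emph{direct} $n$-step construction, with the observation that the cumulative drift after all steps is at most $t\sum_i\alpha_i(M+\varepsilon)=t(M+\varepsilon)\le\lambda(M+\varepsilon)\le\delta_0/2$, uniformly in $n$ and in the particular convex combination. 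So the same $\delta$ and $\lambda$ serve for every $v\in\mathrm{co}\,D_S(x_0)$, and then (iv) handles the closure. With that clarification, (v) is complete.
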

	
	\begin{proof}
		The statements $(i)-(iii)$ follow directly from the definition of a uniform tangent set. The statements $(iv)-(v)$ are from Lemma 2.5 in \cite{KR17}. The proof of $(vi)$ is contained in the proof of Corollary 2.8 in \cite{KR17}.
	\end{proof}

	We are going to use the original definition of the Clarke tangent cone in Banach spaces from \cite{Rock80}:
	\begin{definition} \label{ClarkeCone}
		Let   $S$ be a  closed subset of $X$ and $x_0$ belong to $S$. We
		say that  $v\in X$  is a tangent vector to $S$ at
		the point $x_0$ if for each $\varepsilon >0$ there exist
		$\delta>0$ and $\lambda>0$ such that for each point $x \in S \cap (x_0 + \delta \bar {\mbox{\bf
				B}})$  the set     $S \cap (x+t (v + \varepsilon \bar {\mbox{\bf
				B}}))$ is non empty  for each $t \in
		[0,\lambda]$.
		The set of all tangent vectors to $S$ at $x_0$ is called Clarke tangent cone and is denoted by $\hat T_S(x_0)$.
	\end{definition}
	
	Theorem \ref{equiv_def} implies
	\begin{corol}\label{corol_Clarke}
		Let $S$ be a closed subset of $X$ and let
		$x_0\in S$. Let $D_S(x_0)$ be  an  uniform tangent set to $S$ at the point $x_0 $. Then, $D_S(x_0)$ is a subset of $\hat T_S(x_0)$, where $\hat T_S(x_0)$ is the Clarke tangent cone to $S$ at $x_0$.
	\end{corol}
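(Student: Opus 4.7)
The plan is to derive the corollary directly from the equivalence of conditions (i) and (iii) in Theorem \ref{equiv_def}, which upgrades the definition of a uniform tangent set so that the parameter $\lambda$ becomes uniform in both $v \in D_S(x_0)$ and $x \in S \cap (x_0 + \delta\bar{\mbox{\bf B}})$. Once this uniformization is available, the conclusion is essentially a bookkeeping exercise comparing quantifiers with Definition \ref{ClarkeCone}.

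More concretely, I would argue as follows. Fix an arbitrary $v \in D_S(x_0)$ and an arbitrary $\varepsilon > 0$. By Theorem \ref{equiv_def}, condition (iii) holds, so there exist $\delta > 0$ and $\lambda > 0$, depending only on $\varepsilon$, such that for every $w \in D_S(x_0)$ and every $x \in S \cap (x_0 + \delta\bar{\mbox{\bf B}})$ the set $S \cap (x + t(w + \varepsilon\bar{\mbox{\bf B}}))$ is non-empty for every $t \in [0,\lambda]$. Specializing $w := v$ yields exactly the property required by Definition \ref{ClarkeCone} for $v$ to be a Clarke tangent vector to $S$ at $x_0$. Hence $v \in \hat T_S(x_0)$, and since $v$ was arbitrary in $D_S(x_0)$ we obtain the desired inclusion $D_S(x_0) \subset \hat T_S(x_0)$.

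There is no real obstacle here: the entire content of the corollary is packaged in the equivalence (i)$\Leftrightarrow$(iii) of Theorem \ref{equiv_def}, whose purpose is precisely to trade a pointwise $\lambda = \lambda(v,x)$ for a uniform one. Without that equivalence the statement would fail, because the definition of a uniform tangent set a priori only provides a $\lambda$ depending on $v$ and $x$, whereas the Clarke tangent cone insists on a single $\lambda$ valid for all nearby $x$.
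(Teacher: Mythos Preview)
Your argument is correct and matches the paper's approach exactly: the paper simply states that the corollary is implied by Theorem \ref{equiv_def}, and your proof spells out the obvious specialization of condition (iii) to a fixed $v$, which reproduces verbatim the requirement in Definition \ref{ClarkeCone}.
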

	
	We obtain stronger connections between uniform tangent sets and the Clarke tangent cone. To this end, we will need the definitions below:
	
	\begin{definition}
		The conical hull of a set $S$ is defined as
		$$\mbox{cone } S := \mathbb{R}^{+}S = \{ \alpha s \ | \ \alpha > 0 \mbox{ and } s \in S \} \ .$$
	\end{definition}
	
	\begin{definition}
		A set $S$ is said to generate the cone $C$, if $C$ is the closure of the conical hull of $S$. It is denoted by $\overline{\mbox{cone}} \, S \ .$
	\end{definition}
	
	\begin{definition}[Rockafellar (1979), \cite{Rock79dl}]
		Let $S$ be a closed subset of the Banach space $X$ and $x \in S$. A vector $v\in X$ is said to be a hypertangent to $S$ at $x$ if for some $\eta >0$
		$$y+tw \in S \mbox{    for all } y\in(x+\eta  \bar{\mbox{\bf B}})\cap S, \ w\in v + \eta  \bar{\mbox{\bf B}}, \ t \in (0, \eta) \, .$$
		If $S$ has a hypertangent at $x$, then $S$ is said to be epi-Lipschitz at $x$.
	\end{definition}

	Existence of a uniform tangent set generating the Clarke tangent cone in some cases is shown below.
	
	\begin{lem}
		Let $S$ be a closed subset of the Banach space $X$ and $x \in S$. If there exists a
		hypertangent $v$ to $S$ at the point $x$, then there exists an open uniform tangent set
		$D$ to $S$  at $x$ which generates the Clarke tangent cone $\hat T_S (x)$ to $S$ at $x$.
	\end{lem}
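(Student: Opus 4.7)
My plan is to take $D := \mathrm{int}(\hat{T}_S(x)) \cap M \mathbf{B}$ for some $M > \|v\| + \eta$, where $v$ is the given hypertangent with parameter $\eta$. The easy properties come first: $D$ is open as the intersection of two open sets, and bounded by construction. For generation, note that $v + \eta \mathbf{B} \subset \hat{T}_S(x)$ (every element of this open ball is a hypertangent, hence a Clarke tangent), so $\mathrm{int}(\hat{T}_S(x))$ is nonempty; since $\hat{T}_S(x)$ is a closed convex cone with nonempty interior, $\overline{\mathrm{int}(\hat{T}_S(x))} = \hat{T}_S(x)$. Moreover, for any $w \in \mathrm{int}(\hat{T}_S(x))$ the scaling $w/k$ lies in $D$ for $k$ large enough, so $w \in \mathrm{cone}\,D$ and therefore $\overline{\mathrm{cone}\,D} = \hat T_S(x)$.

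The core of the proof is verifying that $D$ is a uniform tangent set. Fix $\varepsilon > 0$ and set $\mu := \varepsilon/(2\|v\|)$. For each $v' \in D$, consider the perturbation $w := v' + \mu v$, which lies in $v' + (\varepsilon/2) \bar{\mathbf{B}}$. The essential geometric fact is the quantitative slack
$$
w + \mu \eta \mathbf{B} \;=\; v' + \mu(v + \eta \mathbf{B}) \;\subset\; \hat{T}_S(x) + \mu \hat{T}_S(x) \;\subset\; \hat{T}_S(x),
$$
which holds uniformly in $v' \in D$. The plan is to use this slack, together with the fixed hypertangent $v$, to push $y$ along a direction in $v' + \varepsilon \bar{\mathbf{B}}$ while staying in $S$.

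The natural two-step argument is as follows. First, the hypertangency of $v$ gives $z := y + t\mu v \in S$ whenever $y \in S \cap (x + \eta \bar{\mathbf{B}})$ and $t\mu < \eta$, with constants depending only on $\eta$ and $\mu$ (hence independent of $v'$). Second, the Clarke tangency of $v'$ yields $w_{v'} \in v' + \varepsilon'' \bar{\mathbf{B}}$ with $z + t w_{v'} \in S$ for $z$ close to $x$ and $t$ small. Setting $\tilde{w} := w_{v'} + \mu v$ gives $\tilde{w} \in w + \varepsilon'' \bar{\mathbf{B}} \subset v' + \varepsilon \bar{\mathbf{B}}$ (choosing $\varepsilon'' \le \varepsilon/2$), and $y + t\tilde{w} = z + t w_{v'} \in S$, so $S \cap (y + t(v' + \varepsilon \bar{\mathbf{B}}))$ is non-empty.

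The main obstacle is the uniformity of $\delta$ over $v' \in D$: the Clarke tangent constants $\delta_{v'}(\varepsilon''), \lambda_{v'}(\varepsilon'')$ for $v'$ a priori depend on $v'$ and may shrink as $v'$ approaches $\partial \hat{T}_S(x)$, while Definition~\ref{UnCone} requires $\delta$ independent of $v'$ (only $\lambda$ may depend on $v'$ and $y$). Overcoming this is the technical heart of the proof and must exploit the quantitative slack $\mu \eta$ of $w = v' + \mu v$ inside $\hat{T}_S(x)$ together with the uniform hypertangency of $v$. Concretely, one expects to establish directly that the perturbed direction $w$ admits a uniform approximate-tangent estimate, with constants expressible in $\mu$, $\eta$, $M$ and $\|v\|$ alone, by showing that the hypertangent ball $v + \eta \mathbf{B}$ furnishes a ``core'' along which one can always bypass the bad behaviour of $v'$ near the boundary of $\hat{T}_S(x)$ — this is where the bulk of the work lies.
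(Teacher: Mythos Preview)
Your proposal leaves a genuine gap precisely where you flag it. You correctly observe that the two-step argument (move along $\mu v$, then along $v'$, or vice versa) requires the intermediate point to lie within the Clarke--tangent radius $\delta_{v'}$ of $x$, and that $\delta_{v'}$ may be arbitrarily small as $v'$ approaches $\partial\hat T_S(x)$; you then appeal to the ``quantitative slack'' $w + \mu\eta\mathbf{B} \subset \hat T_S(x)$ to rescue uniformity. But this inclusion is information about the \emph{tangent cone}, a first-order linearization of $S$ at $x$, and it does not control the behaviour of $S$ itself near $x$. There is no mechanism by which a uniform lower bound on $\mathrm{dist}(w,\partial\hat T_S(x))$ yields a uniform lower bound on the hypertangent parameter of $w$: the latter is governed by how $S$ curves near $x$ in the direction $w$, and the standard argument that ``Clarke tangent plus hypertangent is a hypertangent'' produces a parameter that depends explicitly on the Clarke--tangent constants $\delta_{v'},\lambda_{v'}$ of $v'$. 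Thus the ``bulk of the work'' you defer is in fact the entire content of the lemma, and the device you propose does not supply it.

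The paper's argument rests on a different idea that sidesteps the uniformity problem altogether. For each unit vector $u \in \mathrm{int}\,\hat T_S(x)$ let $\varepsilon(u)\in(0,1)$ be its hypertangent parameter, and put $D := \{\varepsilon(u)\,u : u \in \mathrm{int}\,\hat T_S(x),\ \|u\|=1\}$. For $w = \varepsilon(u)u \in D$ and a given $\varepsilon>0$ there is a dichotomy: either $\varepsilon \ge \varepsilon(u) = \|w\|$, in which case $\mathbf{0}\in w+\varepsilon\bar{\mathbf{B}}$ and trivially $y\in S\cap\bigl(y+t(w+\varepsilon\bar{\mathbf{B}})\bigr)$ for every $t\ge 0$; or $\varepsilon<\varepsilon(u)$, in which case taking $\delta=\varepsilon$ forces $y\in(x+\varepsilon(u)\mathbf{B})\cap S$, so the hypertangent property of $u$ applies directly. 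Either way $\delta=\varepsilon$ works, \emph{uniformly} in $w\in D$. Directions with small hypertangent parameter are thus rendered harmless by being scaled down to small norm, so that the $\varepsilon$-ball about them swallows the origin. The paper then shows that $u\mapsto\varepsilon(u)$ is locally Lipschitz on the unit sphere, whence the filled-in set $\tilde D=\{\lambda u:0<\lambda<\varepsilon(u)\}\subset\mathrm{conv}(\{\mathbf{0}\}\cup D)$ is open; this $\tilde D$ is the required open uniform tangent set generating $\hat T_S(x)$.
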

	
	\begin{proof} Let us remind that the existence of a hypertangent to $S$ at $x$ implies that every
		element of the interior   $ int\  \hat T_S (x)$ of the Clarke tangent cone $\hat T_S (x)$ to $S$ at $x$
		is a hypertangent as well (cf. Corollary 2 in \cite{Rock80}).
		Hence for every element $v \in  {int}\ \hat T_S (x)$  with $\|v\|=1$ we set
		$$\varepsilon(v):=\sup \left\{\eta\in  (0, 1):
		\begin{array}{l}
		\  y+  \lambda (v + \eta   \mbox{\bf B})  \subset S   \mbox{
			for each } \lambda \in (0,\eta )
		\\
		\mbox{ and  for each } y \in (x+    \eta  \mbox{\bf B} )   \cap S
		\end{array}\right\}.$$  Clearly,  then
		$y+  \lambda (v + \varepsilon(v)  \mbox{\bf B})  \subset S $
		for each $ \lambda \in (0,\varepsilon(v) )$
		and  for each $ y \in (x+    \varepsilon(v)  \mbox{\bf B} )   \cap S$.
		
		We set
		$$
		D := \left\{ \varepsilon (v)  v: \ v \in int\  \hat T_S (x) \mbox{ with } \|v\|=1 \right\}.
		$$
		
		Let us fix an arbitrary $\varepsilon >0$. We set $\delta:=\varepsilon >0 $.
		Let $w$ be an arbitrary element of $D$. Then $w = \displaystyle \varepsilon(v)v  $ for some hypertangent $v$ to $S$ at $x$ with
		$\|v\|=1$, and hence $\|w\|=\varepsilon (v)$.
		
		If $   \varepsilon  \ge \varepsilon (v) = \|w\|$, then   $\mathbf{0} \in w + \varepsilon \bar{\mbox{\bf B}}$.
		Thus for each $\lambda \ge 0$ we have $y \in (y + \lambda (w + \varepsilon \bar{\mbox{\bf B}})) \cap S$.

		If $  \varepsilon  < \varepsilon (v)$, then    $  (x+\delta \bar{\mbox{\bf B}})\cap S \subseteq (x+\varepsilon (v) \bar{\mbox{\bf B}})\cap S $, and hence
		for each $y \in (x+\delta \bar{\mbox{\bf B}})\cap S$ we have that
		$
		y+  \lambda (v + \varepsilon(v) \bar{\mbox{\bf B}})) \subset S
		$ for each $\lambda \in [0, \varepsilon (v)]$.  Therefore
		for each $\lambda \in [0, \varepsilon (v)]$ we have that
		$$
		S\ni y + \lambda v = y + \frac  \lambda {\varepsilon (v)} w \subset
		y + \frac  \lambda {\varepsilon (v)}  (w + \varepsilon \bar{\mbox{\bf B}}).
		$$
		So we have proved that $D$ is a uniform tangent set to $S$ at $x$.
		Next we check that $\Sigma \cap \left({int}\ \hat T_S (x)\right) \ni v\mapsto \varepsilon (v)$ is locally Lipschitz
		continuous function with Lipschitz constant one. Let $v$ and $w$ be arbitrary hypertangents
		of norm one with $\|v-w\|< \varepsilon (v)$. We shall prove that $\varepsilon (w) \ge \varepsilon (v) - \|v-w\|$.
		Indeed, let $y \in x + (\varepsilon (v) - \|v-w\|)\mbox{\bf B} \subset x + \varepsilon (v) \mbox{\bf B}$  and let $\lambda \in (0, \varepsilon (v)- \|v-w\|)
		\subset (0 , \varepsilon (v)) $. Then
		$$
		y +\lambda (w+(\varepsilon (v) - \|v-w\|)\mbox{\bf B}) = y +\lambda (v+ (w-v)+ (\varepsilon (v) - \|v-w\|)\mbox{\bf B})\subset $$ $$
		\subset
		y +\lambda (v + \|v-w\|\mbox{\bf B}+(\varepsilon (v) - \|v-w\|)\mbox{\bf B}) = y +\lambda (v +   \varepsilon (v)  \mbox{\bf B}) \subset S.
		$$
		Therefore \begin{equation}\label{Inequality} \varepsilon (w)\ge \varepsilon (v) - \|v-w\| \mbox{ whenever }
		\|v-w\|< \varepsilon (v)
		. \end{equation}
		
		Now let us fix a hypertangent $v$ of norm one.  We prove that
		the function  $\varepsilon (\cdot)$ is Lipschitz continuous with Lipschitz constant one on the ball
		$v+ \displaystyle \frac {\varepsilon (v)} 4  \mbox{\bf B}$, intersected with $\Sigma \cap \left({int}\ \hat T_S (x)\right)$. Let  us fix two arbitrary points $w_1$ and $w_2$
		in $\left(v+ \displaystyle \frac {\varepsilon (v)} 4  \mbox{\bf B}\right)\cap \left({int}\ \hat T_S (x)\right)$. Then $\|w_i -v \| < \displaystyle \frac {\varepsilon (v)} 4 < {\varepsilon (v)} $,
		and hence $\varepsilon (w_i)\ge \varepsilon (v) - \|v-w_i\|\ge \displaystyle \frac {3\varepsilon (v)} 4$, $i=1,2$.
		Thus $\|w_2-w_1\| < \displaystyle \frac {\varepsilon (v)}2 < \displaystyle \frac {3\varepsilon (v)}4 < \displaystyle   {\varepsilon (w_1)}$,
		$i=1,2$. Applying again the  inequality (\ref{Inequality}), we obtain
		$$
		\varepsilon (w_1)\ge \varepsilon (w_2) - \|w_1 -w_2\| \mbox{ and } \varepsilon (w_2)\ge \varepsilon (w_1) - \|w_1 -w_2\|.
		$$
		Therefore $| \varepsilon (w_1)- \varepsilon (w_2)|\le \|w_1 -w_2\|$, i.e. $\varepsilon (\cdot)$ is locally Lipschitz
		continuous with Lipschitz constant one.
		
		We set
		$$
		\tilde D := \left\{ \lambda   v: \ \ v \in int\  \hat T_S (x) \mbox{ with } \|v\|=1, \mbox{ and }
		\lambda \in (0, \varepsilon (v)) \right\}.
		$$
		\mbox{\bf B}ecause $\tilde D \subset \mbox{ conv }(\{\mathbf{0} \} \cup D)$ and $D$ is a uniform tangent set to $S$ at $x$,
		$\tilde D$ is  a uniform tangent set to $S$ at $x$ as well (cf. Lemma 2.5 of \cite{KR17}).
		Moreover $\tilde D$ is open. Indeed, let us fix an arbitrary point $w \in \tilde D$. Then
		$w=\lambda v$ with $v \in \left( int\  \hat T_S (x)\right) \cap \Sigma$ and $\lambda \in (0, \varepsilon (v))$.
		Let $\delta>0$ be such that $\varepsilon (\cdot)$ is Lipschitz continuous on $\mbox{\bf B}_\delta (v) \cap \Sigma \subset \
		int \ \hat T_S (x)$. We set $\eta \in (0, \min\{\lambda, 1\})$ to satisfy the inequalities
		$$
		\frac 2 \lambda \eta < \delta, \ \left(1 + \frac 2 \lambda\right) \eta < \varepsilon (v) -\lambda \mbox{ and } \eta < \lambda \, .
		$$
		Let $w'$ be an arbitrary element of the ball $\mbox{\bf B}_\eta (w)$.
		Then  $v':= w'/\|w'\|$ is well defined and
		$$
		\|v'-v\| = \left\|\frac {w'}{\|w'\| } - \frac {w}{\|w\| } \right\| =\left\|\frac {w'}{\|w'\| } - \frac {w'}{\|w \| } +
		\frac {w'}{\|w \| }- \frac {w}{\|w\| } \right\| \le
		$$ $$
		\le \|w'\|\frac {|\|w'\|-\|w\|| }{\|w \|\|w'\| } + \frac { \|w' - w\|  }{\|w \|  } \le \frac {2 \|w' - w\|  }{\|w \|  } <
		\frac 2 \lambda \eta < \delta.
		$$
		Then $|\varepsilon (v) - \varepsilon (v')|\le \|v-v'\|< 2\eta / \lambda$, and therefore
		$\varepsilon (v) - 2\eta / \lambda \le \varepsilon (v')$.
		
		On the other hand-side, the inequalities we have imposed on $\eta$ imply that
		$\lambda+\eta< \varepsilon (v) - 2\eta / \lambda$, and thus
		$$0< \lambda':= \|w'\| = \lambda +(\lambda' - \lambda)\le \lambda + |\|w'\|-\|w\|| \le\lambda +
		\|w' - w\| < $$ $$ <\lambda + \eta < \varepsilon (v) - \frac {2\eta}  \lambda \le \varepsilon (v').
		$$
		Therefore $w' = \lambda' v'\in \tilde D$.
		This completes the proof.
	\end{proof}
	
	\begin{lem}[Lemma 2.9 from \cite{BRV17}]\label{separ}
		Let $X$ be a Banach space and let $S$ be its subset. Let $x\in S$ and let the Clarke tangent cone $\hat T_S(x)$ to $S$ at $x$ be separable.  Then there exists a uniform tangent set $D_S(x)$ to $S$ at $x$ which generates $\hat T_S(x)$.
	\end{lem}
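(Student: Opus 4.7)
The plan is to construct $D_S(x)$ as a rescaled countable dense subset of $\hat T_S(x)$, arranged so that all but finitely many elements satisfy the tangency condition trivially. By separability, fix a countable dense sequence $\{v_n\}_{n\ge 1}$ in $\hat T_S(x)$; the degenerate case $\hat T_S(x)=\{\mathbf{0}\}$ is settled by $D_S(x):=\{\mathbf{0}\}$, so I may assume $v_n\ne \mathbf{0}$ for every $n$. Set $w_n:=v_n/(n\|v_n\|)$ and $D_S(x):=\{w_n:n\ge 1\}$, so that $\|w_n\|=1/n$ and $D_S(x)$ is bounded by $1$, as required by Definition \ref{UnCone}.

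For the generation property, each $w_n$ is a strictly positive multiple of $v_n$, so $\mbox{cone}\,D_S(x)=\mbox{cone}\,\{v_n\}$. Since $\hat T_S(x)$ is a closed cone that is stable under positive scalar multiplication, $\mbox{cone}\,\{v_n\}\subset \hat T_S(x)$; conversely, density of $\{v_n\}$ gives $\hat T_S(x)=\overline{\{v_n\}}\subset \overline{\mbox{cone}}\,D_S(x)$. Hence $\overline{\mbox{cone}}\,D_S(x)=\hat T_S(x)$.

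The main step is checking that $D_S(x)$ is a uniform tangent set. Fix $\varepsilon>0$ and choose $N\in\mathbb{N}$ with $N\ge 1/\varepsilon$. For every $n\ge N$, the bound $\|w_n\|\le 1/n\le \varepsilon$ implies $\mathbf{0}\in w_n+\varepsilon\bar{\mbox{\bf B}}$, so $y\in y+t(w_n+\varepsilon\bar{\mbox{\bf B}})$ for every $y\in S$ and every $t\ge 0$, and the tangency condition is vacuously satisfied for any $\delta,\lambda>0$. For each of the finitely many indices $n<N$, $w_n\in\hat T_S(x)$ (the Clarke tangent cone is closed under positive scalar multiplication, as one checks by applying Definition \ref{ClarkeCone} to $v_n$ with precision $\varepsilon/\alpha$ and rescaling time), so Definition \ref{ClarkeCone} furnishes $\delta_n,\lambda_n>0$ such that $S\cap(y+t(w_n+\varepsilon\bar{\mbox{\bf B}}))\ne\emptyset$ for every $y\in S\cap(x+\delta_n\bar{\mbox{\bf B}})$ and every $t\in[0,\lambda_n]$. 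Taking $\delta:=\min\{\delta_n:n<N\}>0$ then witnesses the uniform tangent set condition at precision $\varepsilon$.

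The only conceptual ingredient is the rescaling trick: forcing $\|w_n\|\to 0$ makes the tangency condition automatic as soon as $\|w_n\|\le\varepsilon$, so only finitely many indices need to be controlled uniformly and the corresponding infimum of $\delta_n$'s is automatically positive. Without such a scaling one would have to grapple with $\inf_n\delta_n(\varepsilon)$ taken over a whole dense sequence, which can easily degenerate to zero; everything else in the argument is routine bookkeeping about conical hulls and positive rescaling.
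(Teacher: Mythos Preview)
Your argument is correct. The rescaling $w_n:=v_n/(n\|v_n\|)$ is exactly the right idea: it forces all but finitely many elements of $D_S(x)$ inside $\varepsilon\bar{\mbox{\bf B}}$, so the uniformity of $\delta$ reduces to a finite minimum of the $\delta_n$ supplied by Definition~\ref{ClarkeCone}. The bookkeeping on conical hulls and the handling of the degenerate case $\hat T_S(x)=\{\mathbf{0}\}$ are fine; in the nondegenerate case one may indeed discard $\mathbf{0}$ from any countable dense subset, since $\mathbf{0}$ lies in the closure of $\hat T_S(x)\setminus\{\mathbf{0}\}$ along the ray through any nonzero tangent vector.

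As for comparison: the present paper does not prove this lemma at all---it merely quotes it as Lemma~2.9 of \cite{BRV17}---so there is no in-paper proof to compare against. Your approach is the natural one and is essentially the argument one would expect in \cite{BRV17}; the same rescaling trick appears implicitly in the paper's proof of the preceding lemma (the epi-Lipschitz case), where elements of the form $\varepsilon(v)v$ with $\varepsilon(v)\le 1$ are used so that the tangency condition becomes automatic once $\varepsilon\ge \varepsilon(v)$.
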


	\section{Strong tangential transversality and intersection properties}
	
	The definition below is central for our considerations:
	
	\begin{definition}
		Let $A$ and $B$ be closed subsets of the Banach space $X$ and let
		$x_0\in A\cap B$.
		We say that $A$ and $B$ are strongly tangentially transversal at $x_0$, if there exist $D_A(x_0)$ --  uniform tangent set to $A$ at the point $x_0 $, $D_B(x_0)$ -- uniform  tangent set   to the set $B$ at the point $x_0$ and  $\rho >0$ such that
		$$\rho \bar {\mbox{\bf B}} \subset \overline{\mbox{co}} \left( D_A(x_0) - D_B(x_0) \right) \, $$
		where
		$\bar {\mbox{\bf B}}$ is the closed unit ball of $X$.
	\end{definition}
	
	The following definition is from \cite{BKRtt}:
	
	\begin{definition}
		Let $A$ and $B$ be closed subsets of the Banach space $X$.
		We say that $A$ and $B$ are tangentially transversal at $x_0 \in A \cap B$, if there exist $M>0$, $\delta >0$ and  $\eta >0$ such that for all $x^A \in (x_0 + \delta \bar {\mbox{\bf B}})\cap A$ and $x^B \in (x_0 + \delta \bar {\mbox{\bf B}})\cap B$, there exists a sequence $\{t_m\}$, $t_m\searrow 0$, such that for every $m \in \mathbb{N}$ there exist $w^A_m \in X$ with $\|w^A_m\|\le M$ and $x^A + t_m w^A_m \in A$, and $w^B_m \in X$ with $\|w^B_m\|\le M$, $x^B + t_m w^B_m \in B$ and $\|x^A-x^B + t_m(w^A_m-w^B_m)\| \le \|x^A-x^B\| - t_m \eta \, .$
	\end{definition}
	
	We show that strong tangential transversality of two sets at a common point implies tangential transversality of the sets at the reference point in the following
	
	\begin{prop}
		Let $A$ and $B$ be closed subsets of the Banach space $X$ and let $x_0\in A\cap B$.
		If $A$ and $B$ are strongly tangentially transversal at $x_0$, then $A$ and $B$ are tangentially transversal at $x_0$.
	\end{prop}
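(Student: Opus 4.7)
The plan is to pass to closed convex hulls of the uniform tangent sets, invoke the uniform-in-$v$ version of the tangency property supplied by Theorem \ref{equiv_def}(iii), and then use the inclusion $\rho \bar{\mbox{\bf B}} \subset \overline{\mbox{co}}(D_A(x_0) - D_B(x_0))$ to produce, for each pair $(x^A, x^B)$, a common tangent direction that drives them towards each other.

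To set the constants, first I would apply Proposition \ref{uts}(v) to replace $D_A(x_0), D_B(x_0)$ with their closed convex hulls $\tilde D_A, \tilde D_B$; these are still uniform tangent sets and still bounded, say by $N$. Fix $\varepsilon := \rho/4$ and apply Theorem \ref{equiv_def}(iii) to each of $\tilde D_A, \tilde D_B$ to obtain $\delta_A, \lambda_A$ and $\delta_B, \lambda_B$. Put $\delta := \min\{\delta_A, \delta_B\}$, $\lambda_0 := \min\{\lambda_A, \lambda_B\}$, $M := N + \varepsilon$, and $\eta := \rho/4$.

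Next, fix $x^A \in A \cap (x_0 + \delta \bar{\mbox{\bf B}})$, $x^B \in B \cap (x_0 + \delta \bar{\mbox{\bf B}})$ with $x^A \neq x^B$ (the only nontrivial case), and set $e := (x^A - x^B)/\|x^A - x^B\|$. Here I would use the elementary identity $\mbox{co}(E - F) = \mbox{co}(E) - \mbox{co}(F)$ (which follows from $\sum \alpha_i e_i - \sum \beta_j f_j = \sum_{i,j} \alpha_i \beta_j (e_i - f_j)$ with $\sum_{i,j}\alpha_i\beta_j = 1$) to conclude $\overline{\mbox{co}}(D_A(x_0) - D_B(x_0)) \subset \overline{\tilde D_A - \tilde D_B}$, so $-\rho e$ is a limit of differences $v^A - v^B$ with $v^A \in \tilde D_A$, $v^B \in \tilde D_B$; pick such a pair with $\|v^A - v^B + \rho e\| < \varepsilon$. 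Then choose any sequence $t_m \searrow 0$ with $t_m \le \min\{\lambda_0, \|x^A - x^B\|/\rho\}$, and use Theorem \ref{equiv_def}(iii) to obtain $w_m^A \in v^A + \varepsilon \bar{\mbox{\bf B}}$ with $x^A + t_m w_m^A \in A$ and $w_m^B \in v^B + \varepsilon \bar{\mbox{\bf B}}$ with $x^B + t_m w_m^B \in B$; clearly $\|w_m^A\|, \|w_m^B\| \le M$.

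The remaining step is the distance estimate. Writing $v^A - v^B = -\rho e + r$ with $\|r\| < \varepsilon$, the bound $t_m \rho \le \|x^A - x^B\|$ yields
\begin{align*}
\|x^A - x^B + t_m(w_m^A - w_m^B)\| &\le \|x^A - x^B + t_m(v^A - v^B)\| + 2 t_m \varepsilon \\
&= \|(\|x^A - x^B\| - t_m\rho)\,e + t_m r\| + 2 t_m \varepsilon \\
&\le \|x^A - x^B\| - t_m\rho + 3 t_m \varepsilon = \|x^A - x^B\| - t_m \eta,
\end{align*}
which is exactly the required inequality. I expect the main obstacle to be the passage from the hypothesis ``$-\rho e \in \overline{\mbox{co}}(D_A(x_0) - D_B(x_0))$'' to the existence of \emph{separate} approximants $v^A \in \tilde D_A$, $v^B \in \tilde D_B$; in general $\tilde D_A - \tilde D_B$ is not closed, but the convex-hull identity above bypasses this, since the approximating elements automatically lie in $\mbox{co}\,D_A(x_0) - \mbox{co}\,D_B(x_0) \subset \tilde D_A - \tilde D_B$. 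Everything else is $\varepsilon$-bookkeeping, enabled by the \emph{uniform} $\lambda_0$ furnished by Theorem \ref{equiv_def}(iii).
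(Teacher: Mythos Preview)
Your proof is correct and follows essentially the same route as the paper's: pass to closed convex hulls of the uniform tangent sets, invoke Theorem~\ref{equiv_def}(iii) for a uniform $(\delta,\lambda)$, approximate the unit direction $-e$ (scaled by $\rho$) by a difference $v^A - v^B$, and carry out the same triangle-inequality estimate. The only difference is cosmetic---the paper first rescales so that $\rho = 1$ and then takes any $\varepsilon\in(0,1/3)$ with $\eta = 1 - 3\varepsilon$, whereas you keep $\rho$ and set $\varepsilon = \eta = \rho/4$.
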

	
	\begin{proof}
		Without loss of generality, we may
		assume that the sets $D_A(x_0)$ and $D_B(x_0)$ are closed, convex, contain the origin and $D_A(x_0) - D_B(x_0)$ is dense in $\Sigma$ -- the unit sphere. Let us fix an arbitrary $\varepsilon \in (0, \frac 1 3)$. 
		
		From the definition of a uniform tangent set, applied for $D_A(x_0)$ and $D_B(x_0)$, we obtain that for $\varepsilon >0$ there exist
		$\delta>0$ and $\lambda>0$ such that for each $v \in D_A(x_0) (v \in D_B(x_0))$, for each point $x \in A \cap (x_0 + \delta \bar {\mbox{\bf B}})$ ($x \in B \cap (x_0 + \delta \bar {\mbox{\bf B}})$) and for all $t\in [0,\lambda]$, the following is fulfilled
		\begin{align}\label{def}
		&A \cap \left(x+t \left(v + { \varepsilon} \, \bar {\mbox{\bf B}}\right)\right) \neq \emptyset \\
		&\left(B\cap \left(x+t\left(v+{ \varepsilon}  \, \bar {\mbox{\bf B}}\right)\right) \neq \emptyset \nonumber\right) \, .
		\end{align}
		
		We are going to verify the definition of $A$ and $B$ -- tangentially transversal at $x_0$ with $M := \max\{ \|v\| \ | \ v \in D_A(x_0) \cup D_B(x_0) \} + \varepsilon >0$, $\eta := 1 - 3\varepsilon >0$ and $\delta >0$ from above. Let us fix $x^A \in (x_0 + \delta \bar {\mbox{\bf B}})\cap A$ and $x^B \in (x_0 + \delta \bar {\mbox{\bf B}})\cap B$. Let $t \in [0, \min\{\lambda, \|x^A - x^B\| \}]$ be arbitrary.  We set
		$$
		v:= - \ \frac {x^A - x^ B}{\|x^A - x^B
			\|} \ .
		$$
		Clearly, $\|v\|=1$, and the density of $D_A(x_0) - D_B(x_0)$ in $\Sigma$  yields that there exist
		elements $v^A \in  D_A(x_0)$ and $ v^B \in D_B(x_0)$ such that $\| v - ( v^A -  v^B)\| <  \varepsilon$. From \eqref{def}, we can find $w^A \in X$ and
		$w^B \in X$ such that
		$$
		x^A+ t w^A\in A, \ 
		x^B+ t w^B\in B \mbox{ and }
		\|  v^A- w^A\| \le \varepsilon, \ \| v^B-
		w^B\| \le \varepsilon \, .
		$$
		Then clearly $\|  w^A\| = \|w^A - v^A + v^A\| \le \|w^A -  v^A\| + \| v^A\| \le M$ and $\|
		w^B\|\le M$.
		
		Moreover, we obtain
		\begin{align*}
		\|x^A& - x^B + t
		( w^A -  w^B) \| \\
		&= \|x^A - x^B + t v + t ( (v^A - v^B)-v) +
		t ( w^A - v^A) - t (
		w^B - v^B)\| \\
		&\le \left\|x^A - x^B -  t \frac {x^A -
			x^B}{\|x^A - x^B\|}\right\| + \varepsilon t + t \|w^A -  v^A\|+ t \|w^B - v^B
		\| \\
		&\le \|x^A - x^B\| \left|1 - t \frac
		{1}{\|x^A - x^B\|}\right| +   {3\varepsilon}t
		=  \|x^A - x^B\|  - t + {3\varepsilon} t \\
		&= \|x^A - x^B\| - t \left(1- {3\varepsilon}\right) = \|x^A - x^B\| - t \eta \, ,
		\end{align*}
		which completes the proof.
	\end{proof}
	
	Next, we obtain a tangential intersection property for uniform tangent sets.
	
	\begin{prop}
		Let $A$ and $B$ be closed subsets of the Banach space $X$ and let
		$A$ and $B$ be tangentially transversal at $x_0 \in A \cap B$.
		If $D_A(x_0)$ is a uniform tangent set to $A$ at the point $x_0 $ and $D_B(x_0)$ is a uniform  tangent set  to the set $B$ at the point $x_0$, then $D_A(x_0) \cap D_B(x_0)$ is a uniform  tangent set  to the set $A \cap B$ at the point $x_0$.
	\end{prop}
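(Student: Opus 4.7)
The plan is to verify the definition of a uniform tangent set for $D_A(x_0)\cap D_B(x_0)$ at $A\cap B$ directly. Fix $\varepsilon>0$ and introduce an auxiliary $\varepsilon'\in\bigl(0,\varepsilon\eta/(2M+\eta)\bigr)$, where $M,\eta$ are the constants of tangential transversality. Applying Theorem \ref{equiv_def} (iii) separately to $D_A(x_0)$ and $D_B(x_0)$ at level $\varepsilon'$, one obtains $\delta_0,\lambda_0>0$ (uniform in $v$ and in the base point) so that, for any $v\in D_A(x_0)\cap D_B(x_0)$, any $x\in(A\cap B)\cap(x_0+\delta_0\bar{\mathbf{B}})$ and any $t\in[0,\lambda_0]$, one can pick $y_0^A\in A$ and $y_0^B\in B$ both within $t\varepsilon'$ of $x+tv$, so that $d_0:=\|y_0^A-y_0^B\|\le 2t\varepsilon'$.

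Next, I would carry out a pinching iteration using tangential transversality of $A$ and $B$ at $x_0$. Given $(y_n^A,y_n^B)\in A\times B$ still lying inside the tangential transversality neighborhood of $x_0$, tangential transversality produces $s_n>0$ and $w_n^A,w_n^B$ of norm $\le M$ such that $y_{n+1}^A:=y_n^A+s_n w_n^A\in A$, $y_{n+1}^B:=y_n^B+s_n w_n^B\in B$ and $d_{n+1}:=\|y_{n+1}^A-y_{n+1}^B\|\le d_n-s_n\eta$. Telescoping gives $\eta\sum_n s_n\le d_0\le 2t\varepsilon'$, and since $\|y_{n+1}^A-y_n^A\|,\|y_{n+1}^B-y_n^B\|\le M s_n$, both $(y_n^A)$ and $(y_n^B)$ are Cauchy, converging to limits $y^*\in A$ and $\tilde y^*\in B$ with $\|y^*-\tilde y^*\|=\lim_n d_n$. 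Provided $\lim_n d_n=0$, we have $y^*=\tilde y^*\in A\cap B$, and
$$
\|y^*-(x+tv)\|\le t\varepsilon'+M\sum_n s_n\le t\varepsilon'\Bigl(1+\tfrac{2M}{\eta}\Bigr)\le t\varepsilon,
$$
which is exactly the condition required to witness $D_A(x_0)\cap D_B(x_0)$ as a uniform tangent set to $A\cap B$ at $x_0$.

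The main obstacle is forcing $\lim_n d_n=0$: the tangential transversality definition as stated only guarantees a sequence of admissible step sizes $\{t_m\}\searrow 0$, with no a priori lower bound on $s_n$ quantitatively tied to $d_n$, so a naive choice might leave $d_n\to d^*>0$. I would resolve this by selecting $s_n$ from the admissible sequence so as to realize a uniform fractional decrease $d_{n+1}\le\alpha d_n$ with some fixed $\alpha\in(0,1)$, yielding geometric convergence; if this is not directly available from the sequential form, I would invoke a uniform version of tangential transversality, analogous to the passage from Definition \ref{UnConeSequences} to Theorem \ref{equiv_def} (iii), presumably established in \cite{BKRtt}. A routine final check ensures that the iterates stay inside the relevant neighborhoods of $x_0$ throughout the iteration, via the bound $\|y_n^A-x_0\|\le\|x-x_0\|+t(\|v\|+\varepsilon')+M\sum_n s_n$ (and likewise for $y_n^B$), which is made small by the initial choice of $\delta$ and $\lambda$ together with the boundedness of $D_A(x_0)\cup D_B(x_0)$.
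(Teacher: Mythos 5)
Your set-up is the right one and matches the intended argument (the paper's proof is exactly the scheme of Proposition 3.3 in \cite{BKRtt}, with the sole modification that the neighbourhood parameter is uniform in $v\in D_A(x_0)\cap D_B(x_0)$, which you correctly obtain from Theorem \ref{equiv_def}(iii)); your initial estimate $d_0\le 2t\varepsilon'$ and the final displacement bound $t\varepsilon'\bigl(1+\tfrac{2M}{\eta}\bigr)\le t\varepsilon$ are also correct. However, the step you yourself flag as the ``main obstacle'' is precisely the mathematical heart of the proof, and neither of your proposed remedies closes it. The definition of tangential transversality only provides, for each admissible pair, \emph{some} sequence of step sizes $t_m\searrow 0$ with decrease rate $\eta$ per unit step; it gives no lower bound on an admissible step in terms of the current gap $d_n$, so a uniform fractional decrease $d_{n+1}\le\alpha d_n$ simply cannot be extracted from the definition, and a greedy countable iteration may indeed stall at $d_n\to d^*>0$. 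Your second fallback, a ``uniform version of tangential transversality'' analogous to the equivalence in Theorem \ref{equiv_def}, is not available: no such reformulation is established (nor could steps of a fixed length $\lambda$ be expected, since the decrease condition forces $t\le d/\eta$), and it is not how the cited result is proved.

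The actual mechanism, which your proof is missing, is a transfinite (maximal chain / Zorn-type) argument in the spirit of \cite{KR17} and of Proposition 3.3 in \cite{BKRtt}: one considers chains of admissible pairs $(y^A,y^B)$ along which the gap strictly decreases with the budget $M\sum_n s_n\le\tfrac{M}{\eta}\,d_0$, observes that at a limit stage the limit pair is again admissible (it stays in the $\delta$-neighbourhood of $x_0$ precisely because of that budget, given the initial smallness of $\delta_0$, $\lambda_0$ and $\varepsilon'$), and concludes that a maximal chain must terminate with gap zero, since otherwise tangential transversality would allow one further strictly decreasing step. Without this restart-at-the-limit device (or an equivalent one), the convergence $\lim_n d_n=0$, and hence the existence of the common point $y^*\in A\cap B$ inside $x+t(v+\varepsilon\bar{\mbox{\bf B}})$, is not justified, so the proof as proposed is incomplete at its central step.
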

	\begin{proof}
		The proof is analogous to the proof of Proposition 3.3 in \cite{BKRtt}, only the corresponding $\bar \delta$ is the same for all $v_0\in D_A(x_0) \cap D_B(x_0)$.
	\end{proof}
	
	In infinite dimensions the natural counterpart of the notion ``transversality of two cones'' is not ``the difference of the cones is the whole space'', but the following
	
	\begin{definition}
		Let $X$ be a Banach space and $C_1$, $C_2$ be two convex closed cones in it. We say that $C_1$ are $C_2$ transversal, if there exists a positive real $\rho$ such that
		\begin{equation}\label{transv_cones}
		\rho \bar {\mbox{\bf B}} \subset \overline{(C_1 \cap \bar{\mbox{\bf B}}) - (C_2 \cap \bar{\mbox{\bf B}}) } \, .
		\end{equation}
	\end{definition}
	
	This definition is equivalent to say that $C_1$ and $C_2$ are strongly tangentially transversal at the origin (as sets). 
	
	\begin{rem}
		Strong tangential transversality of two closed sets at a common point implies both tangential transversality of the sets and transversality of the Clarke tangent cones to the sets at the reference point. For example, this allows to use Theorem 3.4 from \cite{BKRtt} in order to obtain Lagrange multipliers.
	\end{rem}
	
	
	In order to prove normal intersection properties, we need the following technical proposition and the lemma below it, which is the key step.
	
	\begin{prop}\label{polars}
		Let $A$ and $B$ be closed convex subsets of the Banach space $X$, containing the origin. Then, the following hold true
		\begin{enumerate}[(i)]
			\item $(A+B)^\circ \subset A^\circ \cap B^\circ \subset 2(A+B)^\circ $
			\item $(A\cap B)^\circ \subset \overline{A^\circ + B^\circ}^{\,w^*} \subset 2(A\cap B)^\circ  \, .$
		\end{enumerate}
	\end{prop}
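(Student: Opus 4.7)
The plan is to handle the four inclusions separately, relying mainly on the order-reversing property of polars, with a single Hahn--Banach / bipolar argument reserved for the one nontrivial step. Three of the inclusions are routine. For (i), the hypothesis $0\in A\cap B$ gives $A,B\subset A+B$, hence $(A+B)^\circ \subset A^\circ \cap B^\circ$; conversely, if $x^*\in A^\circ \cap B^\circ$ then $\langle x^*,a+b\rangle = \langle x^*,a\rangle+\langle x^*,b\rangle \le 2$ for all $a\in A$, $b\in B$, so $x^*\in 2(A+B)^\circ$. For the right-hand inclusion of (ii), the inclusions $A\cap B\subset A,B$ give $A^\circ,B^\circ \subset (A\cap B)^\circ$, and convexity of $(A\cap B)^\circ$ then yields $A^\circ + B^\circ \subset 2(A\cap B)^\circ$. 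Since $(A\cap B)^\circ$ is weak$^*$-closed as a polar, its weak$^*$-closure remains contained in $2(A\cap B)^\circ$.

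The core of the proposition is the inclusion $(A\cap B)^\circ \subset \overline{A^\circ + B^\circ}^{\,w^*}$. I would argue by contradiction. Suppose $x^*\in (A\cap B)^\circ$ lies outside the weak$^*$-closed convex set $\overline{A^\circ+B^\circ}^{\,w^*}$, which contains the origin because $0\in A^\circ\cap B^\circ$. Hahn--Banach separation in $(X^*, w^*)$, whose topological dual is $X$, then produces $x\in X$ and $\alpha \ge 0$ with $\langle x^*, x\rangle > \alpha$ and $\langle a^*+b^*, x\rangle \le \alpha$ for all $a^*\in A^\circ$, $b^*\in B^\circ$. Since $a^*$ and $b^*$ vary independently, I split the supremum to get
$$\sup_{a^*\in A^\circ}\langle a^*, x\rangle + \sup_{b^*\in B^\circ}\langle b^*, x\rangle \le \alpha \, .$$

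The decisive step is to reinterpret these two suprema via the bipolar theorem. Because $A$ and $B$ are closed convex sets containing the origin, $A^{\circ\circ}=A$ and $B^{\circ\circ}=B$, so $\sup_{a^*\in A^\circ}\langle a^*,x\rangle \le t$ is equivalent to $x\in tA$ for every $t>0$, and analogously for $B$. When $\alpha > 0$, taking $t=\alpha$ simultaneously in both places forces $x/\alpha \in A\cap B$, hence $\langle x^*, x\rangle \le \alpha$, a contradiction. When $\alpha = 0$, applying the same equivalence with arbitrary $\alpha'>0$ yields $x/\alpha' \in A\cap B$ for every $\alpha'>0$, and letting $\alpha'\downarrow 0$ gives $\langle x^*, x\rangle \le 0$, again contradicting the strict separation. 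I expect the main conceptual obstacle to be recognising that a single inequality on the sum $a^*+b^*$ can be legitimately split into two separate gauge conditions; once that split is made, the bipolar theorem does all the remaining work, with only the edge case $\alpha = 0$ requiring a small limit argument.
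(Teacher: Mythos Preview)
Your proof is correct and follows essentially the same route as the paper's: the three easy inclusions are handled identically, and for $(A\cap B)^\circ \subset \overline{A^\circ + B^\circ}^{\,w^*}$ both arguments proceed by weak$^*$ Hahn--Banach separation followed by the bipolar theorem to place the separating vector in $A\cap B$. Your version is in fact slightly more careful, since you treat the case $\alpha=0$ explicitly (the paper simply rescales to $\alpha=1$, tacitly using that the strict separation allows $\alpha$ to be taken positive); one small point you leave implicit is that each of $\sup_{A^\circ}\langle\cdot,x\rangle$ and $\sup_{B^\circ}\langle\cdot,x\rangle$ is nonnegative (as $0\in A^\circ\cap B^\circ$), which is what lets you pass from their sum being $\le\alpha$ to each being $\le\alpha$.
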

	\begin{proof}
		$(i)$ We have that 
		$$ A^\circ \cap B^\circ = \{x^* \in X^* \ | \ \langle x^*, a \rangle \le 1 \mbox{ and }  \langle x^*, b \rangle \le 1 \mbox{ for all } a\in A, \, b\in B \} \, ,$$
		hence
		$$ A^\circ \cap B^\circ \subset \{x^* \in X^* \ | \ \langle x^*, a+b \rangle \le 2  \mbox{ for all } a\in A, \, b\in B \} = 2(A+B)^\circ \, .$$
		For the first inclusion -- using that
		$$ (A+B)^\circ = \{x^* \in X^* \ | \ \langle x^*, a+b \rangle \le 1  \mbox{ for all } a\in A, \, b\in B \}$$
		and that the origin belongs to both $A$ and $B$, we obtain
		$$ (A+B)^\circ \subset  \{x^* \in X^* \ | \ \langle x^*, a \rangle \le 1 \mbox{ and }  \langle x^*, b \rangle \le 1  \mbox{ for all } a\in A, \, b\in B \} = A^\circ \cap B^\circ \, .$$
		
		$(ii)$ We have that 
		\begin{align*}
		A^\circ + B^\circ &= \{x^* \in X^* \ | \ \mbox{ there exist } a^*,\, b^*\in X^* \mbox{ s.t. } x^*=a^*+b^* \mbox{ and }\\
		&\langle a^*, a \rangle \le 1 \mbox{ and }  \langle b^*, b \rangle \le 1 \mbox{ for all } a\in A, \, b\in B \} \, ,	
		\end{align*}
		and 
		$$2(A\cap B)^\circ =  \{x^* \in X^* \ | \ \langle x^*, c \rangle \le 2 \mbox{ for all } c\in A\cap B \} \, ,$$
		so the second inclusion is directly obtained, using that $2(A\cap B)^\circ$ is $w^*$-closed.
		
		For the first one, let us assume the contrary -- there exists a point $x_0^*$ in $\overline{A^\circ + B^\circ}^{\,w^*} \setminus (A\cap B)^\circ$. Then, we can separate $x_0^*$ and the $w^*$-closed convex set $(A\cap B)^\circ$. That is, there exist a nontrivial $x_0\in X$ and a nonnegative real $\alpha$, such that
		$$\langle x^*, x_0\rangle \le \alpha < \langle x^*_0, x_0\rangle$$
		for all $x^* \in (A\cap B)^\circ$. Multiplying, if necessary, $x_0$ by a positive constant, we may assume that $\alpha = 1$.
		
		Due to the closedness, convexity and containment of the origin in $A$ and $B$, we have that
		$${x_0} \in \left((A\cap B)^\circ\right)_\circ = \overline{co}\, \left(\{\mathbf{0}\}\cup (A\cap B) )\right) = A\cap B \, ,$$
		which is a contradiction.
	\end{proof}
	
	\begin{lem}\label{weak_star}
		Let $C_A$ and $C_B$ be closed convex cones in the Banach space $X$. Let $C_A$ and $C_B$ be transversal.
		Then, ${C_A}^\circ+ {C_B}^\circ$ is a weak$^*$-closed subset of $X^*$.
	\end{lem}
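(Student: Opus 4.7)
The plan is to reduce the statement, via the Banach--Dieudonn\'e (Krein--\v{S}mulian) theorem, to weak$^*$-closedness of the bounded slices $(C_A^\circ + C_B^\circ) \cap r\bar{\mbox{\bf B}}_{X^*}$. The heart of the argument is a quantitative bound forced by transversality: every representation $x^* = a^* + b^*$ with $a^* \in C_A^\circ$ and $b^* \in C_B^\circ$ must actually satisfy $\|a^*\|, \|b^*\| \le \|x^*\|/\rho$.

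To prove this estimate, fix such a decomposition, pick any $y \in \bar{\mbox{\bf B}}$, and, given $\varepsilon > 0$, use \eqref{transv_cones} to select $c_A \in C_A \cap \bar{\mbox{\bf B}}$ and $c_B \in C_B \cap \bar{\mbox{\bf B}}$ with $\|\rho y - c_A + c_B\| \le \varepsilon$. I would then compute
\begin{align*}
\rho \langle a^*, y \rangle &= \langle a^*, c_A - c_B \rangle + \langle a^*, \rho y - c_A + c_B \rangle \\
&\le \langle a^*, c_A \rangle - \langle a^*, c_B \rangle + \varepsilon \|a^*\| \\
&\le -\langle a^*, c_B \rangle + \varepsilon \|a^*\|,
\end{align*}
where the last step uses $\langle a^*, c_A \rangle \le 0$ from $a^* \in C_A^\circ$, $c_A \in C_A$. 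Writing $a^* = x^* - b^*$ and invoking $\langle b^*, c_B \rangle \le 0$ yields $-\langle a^*, c_B \rangle = -\langle x^*, c_B \rangle + \langle b^*, c_B \rangle \le \|x^*\|$. Letting $\varepsilon \to 0$ and taking the supremum over $y \in \bar{\mbox{\bf B}}$ produces $\rho \|a^*\| \le \|x^*\|$. Applying the same argument to $-y$ (so that $\rho y$ is approximated by an element of $C_B$ minus an element of $C_A$) and interchanging the roles of $(a^*, C_A)$ with $(b^*, C_B)$ gives $\rho \|b^*\| \le \|x^*\|$.

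With this uniform bound available, fix $r > 0$ and set $E_r := (C_A^\circ + C_B^\circ) \cap r\bar{\mbox{\bf B}}_{X^*}$. By Banach--Alaoglu the sets $C_A^\circ \cap (r/\rho)\bar{\mbox{\bf B}}_{X^*}$ and $C_B^\circ \cap (r/\rho)\bar{\mbox{\bf B}}_{X^*}$ are weak$^*$-compact, so the image $F$ of their product under the weak$^*$-continuous addition map is weak$^*$-compact, hence weak$^*$-closed. The norm estimate shows $E_r \subseteq F$, while $F \cap r\bar{\mbox{\bf B}}_{X^*} \subseteq E_r$ is immediate, so $E_r = F \cap r\bar{\mbox{\bf B}}_{X^*}$ is weak$^*$-closed. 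Since $r$ was arbitrary and $C_A^\circ + C_B^\circ$ is convex, the Banach--Dieudonn\'e theorem delivers the desired conclusion.

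The main difficulty is the sharp \emph{a priori} bound in the second paragraph: once it is in place, the Krein--\v{S}mulian reduction is routine. This bound is precisely where the quantitative form of transversality (rather than merely $C_A - C_B = X$, which in infinite dimensions does not by itself ensure weak$^*$-closedness of the dual sum) is essential.
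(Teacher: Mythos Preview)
Your proof is correct, and it shares the paper's overall architecture: reduce via Banach--Dieudonn\'e/Krein--\v{S}mulian to weak$^*$-closedness of bounded slices, and obtain this from an \emph{a priori} bound on the summands in any decomposition $x^* = a^* + b^*$ with $a^* \in C_A^\circ$, $b^* \in C_B^\circ$. The difference lies in how that bound is produced. The paper applies its polar-calculus Proposition~\ref{polars} twice to deduce the inclusion
\[
\overline{(C_A^\circ + \bar{\mbox{\bf B}}_{X^*})}^{w^*} \cap \overline{(-C_B^\circ + \bar{\mbox{\bf B}}_{X^*})}^{w^*} \subset 4C^\circ \subset \tfrac{4}{\rho}\bar{\mbox{\bf B}}_{X^*},
\]
and then argues with nets and subnets. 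You instead establish the sharp estimate $\|a^*\| \le \|x^*\|/\rho$ by a direct duality computation---evaluating $a^*$ on approximants $c_A - c_B$ of $\rho y$ and exploiting only the sign conditions $\langle a^*, c_A\rangle \le 0$, $\langle b^*, c_B\rangle \le 0$---and then finish by exhibiting the slice as the intersection of a weak$^*$-compact sum with a ball. Your route is more elementary and self-contained (no auxiliary polar lemma, no nets), and gives the optimal constant $1/\rho$ rather than $4/\rho$; the paper's route has the virtue of reusing machinery already in place.
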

	
	\begin{rem}
		This is a well-known result, if $C_A$ and $C_B$ are subspaces. For closed convex cones, similar assertion can be found in \cite{Jam72} with the difference that instead of condition \eqref{transv_cones}, the assumed sufficient condition is that
		$$ \rho \bar {\mbox{\bf B}} \subset \overline{(C_A \cap \bar{\mbox{\bf B}}) + (C_B \cap \bar{\mbox{\bf B}}) }$$
		for a positive real $\rho$.
	\end{rem}
	
	\begin{proof}
		Let us examine $x^* \in \overline{{C_A}^\circ+ {C_B}^\circ}^{\,w^*} \cap n \bar {\mbox{\bf B}}_{X^*}$ for some $n\in \mathbb{N}$. Then, there exist nets $\{u_\alpha^* \}_{\alpha} \subset {C_A}^\circ$ and $\{v_\alpha^* \}_{\alpha} \subset {C_B}^\circ$ such that $u_\alpha^* + v_\alpha^* \xrightarrow{w^*} x^*$ in $X^*$ and $\{u_\alpha^* + v_\alpha^* \}_\alpha \subset n \bar{\mbox{\bf B}}_{X^*}$.
		
		Since ${C_A}^\circ$ and ${C_B}^\circ$ are cones, we have that $\frac 1 n y^* \in {C_A}^\circ+ {C_B}^\circ$ if and only if $y^* \in n({C_A}^\circ+ {C_B}^\circ)$. Hence, without loss of generality we may assume that 
		$$\{u_\alpha^* + v_\alpha^* \}_\alpha \subset \bar {\mbox{\bf B}}_{X^*} \, .$$
		
		Let us denote
		$$C:= \overline{({C_A} \cap \bar {\mbox{\bf B}}) - ({C_B} \cap \bar {\mbox{\bf B}})} = \overline{({C_A} \cap \bar {\mbox{\bf B}}) + (-{C_B} \cap \bar {\mbox{\bf B}})} \, .$$
		The assumption $C \supset \rho \bar {\mbox{\bf B}} $ implies 
		\begin{equation}\label{ccirc}
		C^\circ \subset (\rho \bar {\mbox{\bf B}})^\circ = \frac 1 \rho \bar {\mbox{\bf B}}_{X^*} \, .
		\end{equation}
		
		By subsequent use of Proposition \ref{polars} $(i)$ and $(ii)$, we obtain that
		\begin{align*}
		C^\circ& = \left(\overline{({C_A} \cap \bar {\mbox{\bf B}}) + (-{C_B} \cap \bar {\mbox{\bf B}})}\right)^\circ \supset \frac 1 2 \left( ({C_A} \cap \bar {\mbox{\bf B}})^\circ \cap (-{C_B} \cap \bar {\mbox{\bf B}})^\circ \right) \\
		&\supset  \frac 1 2 \left[ \frac 1 2 \overline{({C_A}^\circ + \bar {\mbox{\bf B}}_{X^*})}^{w^*}  \cap  \frac 1 2 \overline{(-{C_B}^\circ + \bar {\mbox{\bf B}}_{X^*})}^{w^*} \right]\\
		&= \frac 1 4 \left[  \overline{({C_A}^\circ + \bar {\mbox{\bf B}}_{X^*})}^{w^*}  \cap   \overline{(-{C_B}^\circ + \bar {\mbox{\bf B}}_{X^*})}^{w^*} \right]
		\end{align*}
		Since
		$$u_\alpha^* = (u_\alpha^* + v_\alpha^*) - v_\alpha^* \in \bar {\mbox{\bf B}}_{X^*} - {C_B}^\circ \subset \overline{(-{C_B}^\circ + \bar {\mbox{\bf B}}_{X^*})}^{w^*}$$
		and
		$$u_\alpha^* \in {C_A}^\circ \subset ({C_A}^\circ + \bar {\mbox{\bf B}}_{X^*}) \subset  \overline{({C_A}^\circ + \bar {\mbox{\bf B}}_{X^*})}^{w^*} \, , $$
		it follows that
		$$\{u_\alpha^*\}_\alpha \subset  \overline{({C_A}^\circ + \bar {\mbox{\bf B}}_{X^*})}^{w^*}  \cap   \overline{(-{C_B}^\circ + \bar {\mbox{\bf B}}_{X^*})}^{w^*} \subset 4 C^\circ \, . $$
		Then, \eqref{ccirc} implies that $\{u_\alpha^*\}_\alpha \subset  \frac 4 \rho \bar {\mbox{\bf B}}_{X^*}$. Hence, $\{u_\alpha^*\}_\alpha$ is bounded and thus $\{v_\alpha^*\}_\alpha$ is bounded as well.
		
		Therefore, we obtain that $u_{\alpha} \xrightarrow{w^*} u_0^*$ and $v_{\alpha} \xrightarrow{w^*} v_0^*$ (by passing to  subnets) and $x^* = u_0^* + v_0^*$. But ${C_A}^\circ$ and ${C_A}^\circ$ are $w^*$-closed and then $u_0^* \in {C_A}^\circ$, $v_0^* \in {C_B}^\circ$ and $x^* \in {C_A}^\circ + {C_B}^\circ$.
		
		We have shown that  $({C_A}^\circ+ {C_B}^\circ) \cap n \bar {\mbox{\bf B}}_{X^*}$ is $w^*$-closed for every $n\in \mathbb{N}$. By using the Banach-Dieudonné theorem (c.f. Theorem 4.4 in \cite{FA}), we obtain that ${C_A}^\circ+ {C_B}^\circ$ is $w^*$-closed.
	\end{proof}
	
	\begin{thm}[Normal intersection property with respect to Clarke normal cones]\label{ttClarke}
		Let $A$ and $B$ be closed subsets of the Banach space $X$ and let
		$x_0\in A\cap B$. Let $A$ and $B$ be tangentially transversal and let the corresponding Clarke tangent cones $\hat T_A(x_0)$ and $\hat T_B(x_0)$ be transversal.
		Then, the following holds true
		$$N_{A\cap B}(x_0) \subset N_A(x_0) + N_B(x_0) \, ,$$
		where $N_S(x)$ is the Clarke normal cone to the set $S$ at the point $x$. 
	\end{thm}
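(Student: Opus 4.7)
The plan is to obtain the normal inclusion by dualizing a tangent inclusion and then exploiting the transversality hypothesis on the cones to remove a weak$^*$-closure. Since $N_S(x) = \hat T_S(x)^\circ$ for the Clarke cones, the task splits into (a) showing $\hat T_A(x_0)\cap \hat T_B(x_0)\subset \hat T_{A\cap B}(x_0)$, (b) polarizing this inclusion via Proposition \ref{polars}\,(ii), and (c) proving that the sum $N_A(x_0)+N_B(x_0)$ is already weak$^*$-closed.

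For step (a), I would take any $v\in \hat T_A(x_0)\cap \hat T_B(x_0)$ and observe that the singleton $\{v\}$ is a (bounded) uniform tangent set to $A$ at $x_0$ and to $B$ at $x_0$; this is exactly the content of Definition \ref{ClarkeCone} and Theorem \ref{equiv_def}\,(iii). The preceding intersection proposition then applies under the tangential transversality hypothesis to give that $\{v\}\cap\{v\}=\{v\}$ is a uniform tangent set to $A\cap B$ at $x_0$, and Corollary \ref{corol_Clarke} forces $v\in \hat T_{A\cap B}(x_0)$. Taking polars of the inclusion yields
\[
N_{A\cap B}(x_0)=\hat T_{A\cap B}(x_0)^\circ \subset \bigl(\hat T_A(x_0)\cap \hat T_B(x_0)\bigr)^\circ.
\]

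For step (b), the Clarke tangent cones are closed convex cones containing the origin, so Proposition \ref{polars}\,(ii) (applied with $A=\hat T_A(x_0)$ and $B=\hat T_B(x_0)$) gives
\[
\bigl(\hat T_A(x_0)\cap \hat T_B(x_0)\bigr)^\circ \subset \overline{\hat T_A(x_0)^\circ + \hat T_B(x_0)^\circ}^{\,w^*} = \overline{N_A(x_0)+N_B(x_0)}^{\,w^*}.
\]
For step (c), the transversality assumption on $\hat T_A(x_0)$ and $\hat T_B(x_0)$ is precisely the hypothesis of Lemma \ref{weak_star}, whose conclusion is that $N_A(x_0)+N_B(x_0)=\hat T_A(x_0)^\circ+\hat T_B(x_0)^\circ$ is weak$^*$-closed. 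Chaining the three inclusions produces $N_{A\cap B}(x_0)\subset N_A(x_0)+N_B(x_0)$.

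I do not anticipate a hard step in the argument itself: steps (a)–(c) are essentially plug-and-play once the right earlier results are lined up. The only subtlety worth double-checking is that the polar-of-intersection inclusion in Proposition \ref{polars}\,(ii) is genuinely sharp enough here, i.e.\ that no extra factor of $2$ appears; for convex cones through the origin the factor does disappear, so the identification of $\bigl(\hat T_A(x_0)\cap \hat T_B(x_0)\bigr)^\circ$ with the weak$^*$-closure of the sum of polars is valid, and Lemma \ref{weak_star} then finishes the argument cleanly.
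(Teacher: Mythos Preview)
Your proposal is correct and follows essentially the same three-step route as the paper: establish the tangential inclusion $\hat T_A(x_0)\cap \hat T_B(x_0)\subset \hat T_{A\cap B}(x_0)$ under tangential transversality, polarize via Proposition \ref{polars}(ii), and remove the weak$^*$-closure using Lemma \ref{weak_star}. The only cosmetic difference is that the paper cites Proposition 3.3 of \cite{BKRtt} directly for the tangent inclusion, whereas you route through the uniform-tangent-set intersection proposition and Corollary \ref{corol_Clarke}; your closing worry about a factor of $2$ is moot, since the needed direction of Proposition \ref{polars}(ii) carries no such factor.
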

	
	\begin{proof}
		From Proposition 3.3 in \cite{BKRtt}, we have
		$$\hat T_A(x_0) \cap \hat T_B(x_0) \subset \hat T_{A\cap B}(x_0) \, . $$
		By taking the polar cones and using Proposition \ref{polars} $(ii)$ and Proposition \ref{weak_star}, we obtain
		$$N_{A\cap B}(x_0) \subset \overline{N_A(x_0) + N_B(x_0)}^{\,w^*} = N_A(x_0) + N_B(x_0) \, ,$$
		which concludes the proof of the theorem.
	\end{proof}

	\begin{corol}
		Let $A$ and $B$ be closed subsets of the Banach space $X$ and let
		$x_0\in A\cap B$. 
		Let $A$ and $B$ be strongly tangentially transversal at $x_0$.
		Then, the following hold true
		\begin{enumerate}[(i)]
			\item $\hat T_A(x_0) \cap \hat T_B(x_0) \subset \hat T_{A\cap B}(x_0) $
			\item $N_{A\cap B}(x_0) \subset N_A(x_0) + N_B(x_0) \, .$
		\end{enumerate}
	\end{corol}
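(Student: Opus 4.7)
The plan is to reduce both assertions to results already established in the paper, namely the preceding proposition (strong tangential transversality $\Rightarrow$ tangential transversality), Proposition 3.3 from \cite{BKRtt}, and Theorem \ref{ttClarke}. Assertion $(i)$ is then almost immediate; the real work goes into verifying the transversality of the Clarke tangent cones required as the second hypothesis of Theorem \ref{ttClarke}.

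First, I would invoke the proposition proved earlier in this section: since $A$ and $B$ are strongly tangentially transversal at $x_0$, they are tangentially transversal at $x_0$. In particular, Proposition 3.3 from \cite{BKRtt} (which is the statement used in the proof of Theorem \ref{ttClarke}) gives
$$\hat T_A(x_0) \cap \hat T_B(x_0) \subset \hat T_{A\cap B}(x_0),$$
yielding $(i)$.

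Next, I would verify that the closed convex cones $\hat T_A(x_0)$ and $\hat T_B(x_0)$ are transversal in the sense of the definition preceding Lemma \ref{weak_star}. Let $D_A(x_0)$, $D_B(x_0)$ and $\rho>0$ be the uniform tangent sets and constant from the definition of strong tangential transversality. By boundedness of uniform tangent sets, pick $M>0$ with $D_A(x_0)\cup D_B(x_0)\subset M\bar{\mbox{\bf B}}$. Corollary \ref{corol_Clarke} gives $D_A(x_0)\subset\hat T_A(x_0)$ and $D_B(x_0)\subset\hat T_B(x_0)$, so
$$D_A(x_0)-D_B(x_0)\subset\bigl(\hat T_A(x_0)\cap M\bar{\mbox{\bf B}}\bigr)-\bigl(\hat T_B(x_0)\cap M\bar{\mbox{\bf B}}\bigr).$$
The right-hand side is already convex, because $\hat T_A(x_0)$ and $\hat T_B(x_0)$ are convex cones (so their intersections with a ball are convex) and the Minkowski difference of two convex sets is convex. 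Hence taking closed convex hulls and applying the hypothesis on $\rho$,
$$\rho\bar{\mbox{\bf B}}\subset\overline{\mbox{co}}\bigl(D_A(x_0)-D_B(x_0)\bigr)\subset\overline{\bigl(\hat T_A(x_0)\cap M\bar{\mbox{\bf B}}\bigr)-\bigl(\hat T_B(x_0)\cap M\bar{\mbox{\bf B}}\bigr)}.$$
Rescaling by $1/M$ and using that $\hat T_A(x_0)$ and $\hat T_B(x_0)$ are cones yields
$$\tfrac{\rho}{M}\bar{\mbox{\bf B}}\subset\overline{\bigl(\hat T_A(x_0)\cap\bar{\mbox{\bf B}}\bigr)-\bigl(\hat T_B(x_0)\cap\bar{\mbox{\bf B}}\bigr)},$$
which is precisely the transversality of $\hat T_A(x_0)$ and $\hat T_B(x_0)$.

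With both hypotheses of Theorem \ref{ttClarke} verified — tangential transversality from the first step and cone transversality from the second — that theorem applies directly and delivers $(ii)$. The only mildly delicate point is the convexity observation that lets one replace $\overline{\mbox{co}}$ by $\overline{(\,\cdot\,)}$ when transferring the inclusion from the uniform tangent sets to the Clarke cones; everything else is bookkeeping.
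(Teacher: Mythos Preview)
Your proof is correct and follows essentially the same route the paper intends: the corollary is stated without proof because the Remark just before Proposition~\ref{polars} already records that strong tangential transversality implies both tangential transversality of the sets and transversality of the Clarke tangent cones, after which $(i)$ follows from Proposition~3.3 of \cite{BKRtt} and $(ii)$ from Theorem~\ref{ttClarke}. Your explicit verification of the cone transversality via Corollary~\ref{corol_Clarke}, the convexity of $(\hat T_A(x_0)\cap M\bar{\mbox{\bf B}})-(\hat T_B(x_0)\cap M\bar{\mbox{\bf B}})$, and the rescaling by $1/M$ simply spells out the content of that Remark and is correct.
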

	
	In order to apply the obtained normal intersection properties in some particular cases, we will need some technical results. 
	
	\begin{definition}[Definition 4.1.5 from \cite{LY}]\label{fincodim}
		A subset $S$ of the Banach space $X$ is said to be finite codimensional in $X$, if there exists a point  $x_0\in \overline{co}\, S$ such that $\overline{span}\,\{S-x_0\}$ is a finite codimensional subspace of $X$ and $\overline{co}\,\{S-x_0\}$ has a nonempty interior in this subspace.
	\end{definition}
	
	We are going to use a concept introduced in \cite{KRTs} which
	extents the notion of  a finite codimensional subset of $X$:
	
	\begin{definition}\label{solid}
		Let $X$ be a Banach space and $S$ be a  subset of $X$. The set $S$
		is said to be  quasisolid if its closed convex hull
		$\overline{\mbox{\rm co}} \ S $ has nonempty interior in its
		closed affine hull, i.e. if  there exists a point  $x_0  \in
		\overline{\mbox{\rm co}}$ ${S}$ such that $\overline{\mbox{\rm
				co}} \ \{S -x_0\} $ has nonempty interior in $\overline{\mbox{\rm
				span}}$ $(S - x_0)$ (the closed subspace spanned by $ S-x_0 $).
	\end{definition}

	\begin{prop}\label{quasis}
		Let $A$ and $B$ be closed subsets of the Banach space $X$ and let
		$x_0\in A\cap B$.  Let $D_A(x_0)$ and $D_B(x_0)$ be bounded subsets of $X$ that generate the corresponding Clarke tangent cones $\hat T_A(x_0)$ and $\hat T_B(x_0)$ to $A$ and $B$ at $x_0$. Let the set $D_A(x_0) - D_B(x_0)$ be quasisolid. Let $\hat T_A(x_0)-\hat T_B(x_0)$ be dense in $X$. 
		Then, $\hat T_A(x_0)$ and $\hat T_B(x_0)$ are transversal.
		
		Moreover, if $D_A(x_0)$ is  an  uniform tangent set to $A$ at the point $x_0$ and $D_B(x_0)$ is an  uniform  tangent set    to the set $B$ at the point $x_0$, then $A$ and $B$ are strongly tangentially transversal.
	\end{prop}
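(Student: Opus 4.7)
My plan is to establish the first conclusion (transversality of the Clarke tangent cones) and then derive the second (strong tangential transversality) from it by a uniform-tangent-set enlargement. The common starting point is the elementary inclusion
$$K := \overline{\mbox{co}}(D_A(x_0) - D_B(x_0)) \;\subset\; \overline{(\hat T_A(x_0) \cap M\bar{\mbox{\bf B}}) - (\hat T_B(x_0) \cap M\bar{\mbox{\bf B}})},$$
where $M := \sup\{\|v\| : v \in D_A(x_0) \cup D_B(x_0)\}$. Indeed, since $D_A(x_0) \subset \hat T_A(x_0)$ and $D_B(x_0) \subset \hat T_B(x_0)$ by Corollary \ref{corol_Clarke}, each finite convex combination $\sum_i \lambda_i(d_{A,i} - d_{B,i})$ rewrites as $(\sum_i \lambda_i d_{A,i}) - (\sum_i \lambda_i d_{B,i})$, whose two summands lie in $\hat T_A(x_0)$ and $\hat T_B(x_0)$ (by convexity) and have norm at most $M$.

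For the first conclusion, the quasisolid hypothesis supplies $x_1 \in K$ and $r > 0$ with the relative ball $(x_1 + r\bar{\mbox{\bf B}}) \cap L \subset K$, where $L = \overline{\mbox{aff}}(K)$. The first sub-step is to argue $L = X$: combining the density of $\hat T_A(x_0) - \hat T_B(x_0)$ with the fact that $\mathbf{0}$ belongs to both cones and that $D_A(x_0), D_B(x_0)$ generate them as closed conical hulls, one identifies $\overline{\mbox{span}}(D_A(x_0)) = \overline{\mbox{span}}(\hat T_A(x_0))$, similarly for $B$, yielding $\overline{\mbox{span}}(D_A(x_0)) + \overline{\mbox{span}}(D_B(x_0)) = X$; an origin-adjunction bookkeeping then gives $L = X$. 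Once $L = X$, the displayed inclusion shows $B(x_1, r)$ lies in a closed convex set also containing $\mathbf{0}$; exploiting the cone-scaling identity $\hat T_A(x_0) \cap N\bar{\mbox{\bf B}} = N(\hat T_A(x_0) \cap \bar{\mbox{\bf B}})$ and a symmetrization around the origin (using that $\mathbf{0} = \mathbf{0} - \mathbf{0}$ sits in the difference), the off-centre ball $B(x_1, r)$ is converted into a ball about the origin, producing $\rho > 0$ with $\rho \bar{\mbox{\bf B}} \subset \overline{(\hat T_A(x_0) \cap \bar{\mbox{\bf B}}) - (\hat T_B(x_0) \cap \bar{\mbox{\bf B}})}$.

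For the second conclusion, assume additionally that $D_A(x_0)$ and $D_B(x_0)$ are uniform tangent sets. Since $\{\mathbf{0}\}$ is trivially a uniform tangent set, parts (iii), (v) and (i) of Proposition \ref{uts} give that $D_A' := c\,\overline{\mbox{co}}(D_A(x_0) \cup \{\mathbf{0}\})$ and $D_B' := c\,\overline{\mbox{co}}(D_B(x_0) \cup \{\mathbf{0}\})$ are uniform tangent sets for every $c > 0$. A direct computation analogous to that in the proof of Proposition \ref{polars} shows that $\overline{\mbox{co}}(D_A' - D_B')$ equals $c$ times the closed convex hull of $(D_A(x_0) - D_B(x_0)) \cup D_A(x_0) \cup (-D_B(x_0)) \cup \{\mathbf{0}\}$; choosing $c$ large and invoking the $L = X$ conclusion together with the ball $B(x_1, r) \subset K$ and the origin-enrichment produces a ball $\rho \bar{\mbox{\bf B}}$ inside $\overline{\mbox{co}}(D_A' - D_B')$, realising the definition of strong tangential transversality.

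The main obstacle is the off-centre nature of the interior ball $B(x_1, r) \subset K$: the Clarke tangent cones are not symmetric, so translation by $-x_1$ does not preserve membership in the cone-theoretic difference. Navigating this requires carefully orchestrating the density of $\hat T_A(x_0) - \hat T_B(x_0)$, the origin-inclusion in each cone, and (for the second part) the freedom afforded by Proposition \ref{uts} to enlarge uniform tangent sets by adjoining $\mathbf{0}$, taking closed convex hull, and rescaling — all simultaneously, in order to both identify $L$ with $X$ and re-balance the off-centre ball around the origin.
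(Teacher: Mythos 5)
Your first sub-step is where the argument breaks: the claim that $L:=\overline{\mbox{aff}}\,\overline{\mbox{co}}\left(D_A(x_0)-D_B(x_0)\right)$ equals $X$ does not follow from the hypotheses and is in fact false in general, because the affine hull of a difference set is not controlled by the spans of the two factors. Take $X=\mathbb{R}^2$, $x_0=\mathbf{0}$, $A=\hat T_A(\mathbf{0})=\{(x,y):y\ge |x|\}$, $B=\hat T_B(\mathbf{0})=\{(x,y):y\ge 2|x|\}$, $D_A(x_0)=\{(t,1):|t|\le 1\}$ and $D_B(x_0)=\{(t,1):|t|\le 1/2\}$. These bounded sets generate the two Clarke cones (and are even uniform tangent sets, by Proposition \ref{uts} (ii) and (vi), since $A$ and $B$ are convex), $\hat T_A(\mathbf{0})-\hat T_B(\mathbf{0})=\mathbb{R}^2$ is dense, and $D_A(x_0)-D_B(x_0)=[-3/2,3/2]\times\{0\}$ is quasisolid; yet $L$ is the horizontal axis, a proper subspace. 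So ``$\overline{\mbox{span}}\,D_A(x_0)+\overline{\mbox{span}}\,D_B(x_0)$ dense'' (which does hold) cannot give $L=X$ by any bookkeeping: the problem is that you form the difference of the \emph{original} sets, which need not contain the origin, whereas the whole point is that $D_A(x_0)\cup(-D_B(x_0))\cup\{\mathbf{0}\}$ must be injected into the difference before anything global can be said about its affine hull or interior.

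The second gap is that the passage from the off-centre ball $B(x_1,r)$ to a ball centred at the origin --- which you yourself identify as ``the main obstacle'' --- is never actually performed; a closed convex set containing $\mathbf{0}$ and an off-centre ball need not contain any centred ball, so the density of $\hat T_A(x_0)-\hat T_B(x_0)$ has to be used at exactly this point, and in your outline it has already been spent on the invalid $L=X$ step. The paper's proof avoids both difficulties by a different mechanism: it first replaces $D_A(x_0)$ and $D_B(x_0)$ by closed convex uniform tangent sets containing the origin (legitimate by Proposition \ref{uts}), so that $D:=\overline{D_A(x_0)-D_B(x_0)}$ contains $D_A(x_0)$, $-D_B(x_0)$ and $\mathbf{0}$, and then argues by contradiction: if $\mathbf{0}\notin \mbox{int}\,D$ (including the case $\mbox{int}\,D=\emptyset$, where quasisolidity forces $\overline{\mbox{span}}\,D\neq X$), there is a nonzero $\xi\in X^*$ with $\xi\le 0$ on $D_A(x_0)-D_B(x_0)$; the presence of $\mathbf{0}$ in both sets splits this into $\xi\le 0$ on $D_A(x_0)$ and $\xi\ge 0$ on $D_B(x_0)$, hence $\xi\le 0$ on $\hat T_A(x_0)-\hat T_B(x_0)$, contradicting density. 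Therefore $\mathbf{0}\in\mbox{int}\,D$, which yields transversality of the cones and strong tangential transversality simultaneously. This origin-normalization-plus-separation argument is the missing idea; note also that once you enlarge the sets (as you do in your second part via $D_A'$, $D_B'$), you still owe a justification that quasisolidity of the original difference is inherited by the enlarged one, which your proposal does not address.
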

	
	\begin{proof}
		We can assume without loss of generality that the sets $D_A(x_0)$ and $D_B(x_0)$ are closed, convex, contain the origin and are contained in the unit ball. We will verify that the set $D := \overline{D_A(x_0) - D_B(x_0)}$ contains a neighbourhood of zero.
		
		We consider two cases:\\
		(i) The origin belongs to the interior of $D$\\
		(ii) The origin does not belong to the interior of $D$ or $\mbox{int} \, D = \emptyset$.

		In case (ii), either $\mathbf{0} \notin \mbox{int} \, D \neq \emptyset$ and we can separate them; or if $\mbox{int} \, D = \emptyset$ due to the quasisolidity of $D$ we have that $\overline{span}\, D = \overline{aff}\, D \neq X$. In both cases, there exists a nonzero $\xi \in X^*$ such that
		\begin{equation*}
		\langle \xi, v_1 - v_2  \rangle \le\ 0
		\end{equation*}
		for each $v_1\in D_A(x_0)$ and for each $v_2 \in D_B(x_0)$.
		The assumption that $\mathbf{0} \in D_A(x_0) \cap D_B(x_0)$ implies
		$$\langle \xi, v_1  \rangle \le\ 0 \le \langle \xi, v_2  \rangle$$
		and therefore
		\begin{equation*}
		\langle \xi, u_1 - u_2  \rangle \le\ 0
		\end{equation*}
		for each $u_1\in \hat T_A(x_0)$ and for each $u_2 \in \hat T_B(x_0)$. We have obtained that $\hat T_A(x_0)-\hat T_B(x_0)$ is contained in the closed halfspace $\{x\in X \ | \ \langle \xi, x  \rangle \le\ 0 \}$, which is a contradiction with $\overline{\hat T_A(x_0)-\hat T_B(x_0)} = X$.
		
		Therefore, case (i) holds true and then
		$$\rho \bar {\mbox{\bf B}} \subset \overline{D_A(x_0) - D_B(x_0)} \subset \overline{(\hat T_A(x_0) \cap \bar{\mbox{\bf B}}) - (\hat T_B(x_0) \cap \bar{\mbox{\bf B}}) } \, $$
		for some $\rho>0$.
	\end{proof}

	Till the end of the section, we will apply the obtained results to some known classes of sets. To this end, we will need the definitions below.
	
	\begin{definition}[Borwein and Strojwas, \cite{BS85}]
		Let $A$ be a closed subset of the Banach space $X$ and $x_0\in A$. We say that $A$ is compactly epi-Lipschitz (massive) at $x_0$, if there exist $\varepsilon>0$, $\delta >0$ and a compact set $K \subset X$, such that for all $x \in A \cap (x_0 + \delta \bar {\mbox{\bf B}})$, for all $v \in X$, $\|v\|\le \varepsilon$ and for all $t \in [0, \delta]$, there exists $k \in K$, for which $x + t(v-k)\in A$.
	\end{definition}
	
	The following definition is equivalent to the original definition of epi-Lipschitz-like sets in \cite{Borw87}:
	
	\begin{definition}
		Let $S$ be a closed subset of the Banach space $X$ and $x \in S$. The set $S$ is said to be epi-Lipschitz-like at $x$ if there exists $\delta >0$, a convex set $\Omega$ which is finite codimensional in $X$, and $\eta >0$ such that 
		$$y + t\Omega \subset S \mbox{   for all  }
		y \in (x+\delta \bar{\mbox{\bf B}})\cap S, \ t \in (0, \eta) \, .$$
	\end{definition}
	
	\begin{prop}\label{finite}
		Let $X$ be a Banach space and $S \subset \tilde S$ be subsets of $X$. If $S$ is finite codimensional, then $\tilde S$ is finite codimensional as well.
	\end{prop}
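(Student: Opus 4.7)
The plan is to show that the same base point $x_0\in\overline{\mbox{co}}\,S$ which witnesses the finite codimensionality of $S$ also witnesses that of $\tilde S$. Set $Y_1:=\overline{\mbox{span}}\,\{S-x_0\}$, which is closed and finite codimensional in $X$ by hypothesis, and fix $u_0\in Y_1$ together with $r>0$ such that $u_0+r\,\mbox{\bf B}_{Y_1}\subset\overline{\mbox{co}}\,\{S-x_0\}$. Since $S\subset\tilde S$ we automatically have $x_0\in\overline{\mbox{co}}\,\tilde S$. Set $Y_2:=\overline{\mbox{span}}\,\{\tilde S-x_0\}\supset Y_1$; because $Y_2$ contains the finite-codimensional subspace $Y_1$, it is itself finite codimensional in $X$. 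It therefore remains only to establish that $\overline{\mbox{co}}\,\{\tilde S-x_0\}$ has nonempty interior in $Y_2$.

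For this, I would fix a topological direct-sum decomposition $Y_2=Y_1\oplus F$ with $F$ a finite-dimensional complement of $Y_1$ in $Y_2$ (which exists because any finite-dimensional subspace is complemented, and $Y_2/Y_1$ is finite-dimensional), and let $P:Y_2\to F$ denote the continuous projection along $Y_1$. Set $n:=\dim F$. Since $\overline{\mbox{span}}\,\{\tilde S-x_0\}=Y_2$, the set $P(\tilde S-x_0)$ has dense span in $F$; as $F$ is finite-dimensional, this span is in fact all of $F$. Hence one may pick $c_1,\dots,c_n\in\overline{\mbox{co}}\,\{\tilde S-x_0\}$ whose projections $P(c_1),\dots,P(c_n)$ form a basis of $F$, and write $c_i=y_i+P(c_i)$ with $y_i\in Y_1$.

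The core construction is as follows. For $u\in u_0+r\,\mbox{\bf B}_{Y_1}$ and $(t_0,s_1,\dots,s_n)$ in the open simplex (with $t_0,s_i>0$, $t_0+\sum s_i=1$), convexity yields
$$\Phi(u,s):=t_0 u+\sum_{i=1}^n s_i c_i=\Bigl(t_0 u+\sum s_i y_i\Bigr)+\sum s_i P(c_i)\in\overline{\mbox{co}}\,\{\tilde S-x_0\}\,.$$
Its $F$-component $s\mapsto\sum s_i P(c_i)$ is a linear isomorphism $\mathbb{R}^n\to F$, while for fixed $s$ the $Y_1$-component $t_0 u+\sum s_i y_i$ sweeps an open ball in $Y_1$ of radius $t_0 r$ as $u$ varies.

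The main obstacle is that these $Y_1$-balls have centers that shift as $s$ varies, so one does not immediately obtain a product open set in $Y_2=Y_1\oplus F$. I would handle this by continuity. Pick a barycentric choice $s^*$ with all coordinates positive and set $f^*:=\sum s_i^* P(c_i)$, $z^*:=t_0^* u_0+\sum s_i^* y_i$. The linear inverse of $s\mapsto\sum s_i P(c_i)$ is continuous, so for $f$ ranging over a sufficiently small neighborhood of $f^*$ in $F$ the corresponding $Y_1$-center stays within, say, $t_0^* r/4$ of $z^*$ while the radius remains at least $t_0^* r/2$. All these balls then contain a common ball $z^*+\rho'\,\mbox{\bf B}_{Y_1}$ (with $\rho'=t_0^* r/4$), and the resulting product $(z^*+\rho'\,\mbox{\bf B}_{Y_1})\oplus(f^*+\rho''\,\mbox{\bf B}_F)$ is an open subset of $Y_2$ contained in $\overline{\mbox{co}}\,\{\tilde S-x_0\}$, completing the proof.
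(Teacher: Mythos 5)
Your proof is correct and follows essentially the same route as the paper's: keep the same base point $x_0$, note that $\overline{\mbox{span}}\,(\tilde S-x_0)$ is finite codimensional because it contains $\overline{\mbox{span}}\,(S-x_0)$, select finitely many elements of $\tilde S-x_0$ spanning the finitely many missing directions, and take convex combinations with the relative interior ball of $\overline{\mbox{co}}\,(S-x_0)$ to exhibit a relative interior point of $\overline{\mbox{co}}\,(\tilde S-x_0)$. The only difference is presentational: the paper first normalizes via Proposition 4.3.1 of \cite{LY} so that the origin is the relative interior point and states the final ``common ball around $\bar x$'' step tersely, whereas you work with a general interior point and supply the projection/continuity details explicitly.
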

	
	\begin{proof}
		The idea of the proof is the same as in the proof of Proposition 4.3.4 from \cite{LY}. We provide it here for completeness.
		
		By Proposition 4.3.1 from \cite{LY} we have that the set $S$ is finite codimensional if and only if there exists $x_0 \in \overline{co}\, S$ such that the space\\ $X_1:=\overline{span}\,(S-x_0)$ is a finite codimensional subspace of $X$ and
		$$\bar {\mbox{\bf B}}_\delta (\mathbf{0}) \cap X_1 \subset \overline{co}\,(S-x_0) \, . $$	
		
		Let us denote $X_0:= \overline{span}\,(\tilde S - x_0)$. Since $X_0 \supset X_1$, we have that $X_0$ is a  finite codimensional subspace of $X$. If $X_0=X_1$, we are done. Otherwise, there exist linearly independent $x_1, x_2, \dots, x_k \in (\tilde S-x_0) \setminus X_1$, such that
		\begin{equation}\label{span}
		X_0:= \overline{span} \{ X_1, x_1, x_2, \dots, x_k  \} \, .
		\end{equation}
		
		We have that $\sum_{i=1}^{k} \lambda_i x_i \in \overline{co}\,(\tilde S - x_0)$ for all $\lambda_i\ge 0$ with $\sum_{i=1}^{k} \lambda_i \le 1$. Thus, for all $y\in\bar {\mbox{\bf B}}_\delta (\mathbf{0})\cap X_1$, for all $\lambda_i\ge 0$ with $\sum_{i=1}^{k} \lambda_i \le 1$ and for all $\lambda, \mu \ge 0$ with $\lambda+\mu = 1$, we obtain that
		$$\lambda y+ \mu \sum_{i=1}^{k} \lambda_i x_i \in \lambda\, \overline{co}\,(S-x_0)+ \mu\, \overline{co}\,(\tilde S - x_0) \subset \overline{co}\,(\tilde S - x_0) \, . $$
		Then, using \eqref{span}, it follows that for some $\bar \delta >0$
		$$\bar {\mbox{\bf B}}_{\bar\delta} (\bar x) \cap X_0 \subset \overline{co}\,(\tilde S-x_0) \, , $$	
		where $\bar x:= \frac{1}{k}\sum_{i=1}^{k} x_i \in \overline{co}\,(\tilde S - x_0)$. This completes the proof.
	\end{proof}
	
	\begin{corol}\label{ell}
		Let $A$ and $B$ be closed subsets of the Banach space $X$ and let
		$x_0\in A\cap B$. Let $A$ be epi-Lipschitz-like at $x_0$. Let $\hat T_A(x_0)-\hat T_B(x_0)$ be dense in $X$. Then, $A$ and $B$ are  tangentially transversal and have normal intersection property.
	\end{corol}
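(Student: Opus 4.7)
The plan is to apply Theorem \ref{ttClarke}: the normal intersection property will follow once we establish (i) tangential transversality of $A$ and $B$ at $x_0$ (which is also the first half of the conclusion) and (ii) transversality of the Clarke tangent cones $\hat T_A(x_0)$ and $\hat T_B(x_0)$. From the epi-Lipschitz-like hypothesis on $A$ I extract a finite codimensional convex $\Omega$ and constants $\delta_0, \eta > 0$ with $y + t\Omega \subset A$ for $y \in (x_0 + \delta_0 \bar{\mbox{\bf B}}) \cap A$ and $t \in (0,\eta)$; replacing $\Omega$ by $\mbox{co}(\{0\} \cup \Omega)$ we may assume $0 \in \Omega$, preserving the translation property. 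Then $\Omega \subset \hat T_A(x_0)$, and by Proposition \ref{finite} the Clarke tangent cone $\hat T_A(x_0)$ is finite codimensional in $X$.

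For (ii) I would apply the first part of Proposition \ref{quasis} with the bounded generating sets $D_A := \hat T_A(x_0) \cap \bar{\mbox{\bf B}}$ and $D_B := \hat T_B(x_0) \cap \bar{\mbox{\bf B}}$. Each generates its Clarke tangent cone. Since $\hat T_A(x_0)$ is a finite codimensional closed convex cone, a small positive rescaling places a ball of a finite codimensional closed subspace of $X$ inside $D_A$, which makes $D_A$ finite codimensional; as $0 \in D_B$, the difference $D_A - D_B \supset D_A$ is finite codimensional too by Proposition \ref{finite}, and consequently quasisolid (Definition \ref{solid} being a weakening of Definition \ref{fincodim}). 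Combined with the density hypothesis on $\hat T_A(x_0) - \hat T_B(x_0)$, Proposition \ref{quasis} yields the transversality of the Clarke tangent cones.

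For (i) I would argue directly from (ii) and the uniform $\Omega$-tangency. For $x^A, x^B$ near $x_0$, the unit direction $v = -(x^A - x^B)/\|x^A - x^B\|$ is approximable, by (ii), as $v \approx \tilde w^A - \tilde w^B$ with $\tilde w^A \in \hat T_A(x_0)$, $\tilde w^B \in \hat T_B(x_0)$, both of norm at most $1/\rho$. The $A$-motion $x^A + t w^A \in A$ is realized uniformly in $x^A$ by first advancing in an $\Omega$-direction (which lands in $A$ uniformly by epi-Lipschitz-like) and then applying the Clarke tangent property of $\tilde w^A$; the $B$-motion $x^B + t w^B \in B$ is realized by the Clarke tangent property of $\tilde w^B$, with the sequence $t_m \searrow 0$ allowed to depend on $(x^A, x^B)$. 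The main obstacle is securing a uniform neighborhood radius $\delta$ across all $(x^A, x^B)$: since $\tilde w^B$ varies and is not a priori confined to a compact subset of $\hat T_B(x_0)$, the finite codimensional structure of $\hat T_A(x_0)$ must be used to split $v$ into a bulk component absorbed by $A$-motion inside $\Omega$ and a finite-dimensional residual component for which $\tilde w^B$ may be drawn from a relatively compact family, giving a uniform $\delta$ via a finite cover argument. With (i) and (ii) established, Theorem \ref{ttClarke} delivers the normal intersection property.
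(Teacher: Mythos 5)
Your treatment of the Clarke-cone transversality and of the normal intersection property is essentially the paper's own argument: note that $\Omega\subset\hat T_A(x_0)$, use Proposition \ref{finite} to get finite codimensionality of $\hat T_A(x_0)$, hence of $\hat T_A(x_0)\cap\bar{\mbox{\bf B}}$ and of the difference set $\overline{\hat T_A(x_0)\cap\bar{\mbox{\bf B}}-\hat T_B(x_0)\cap\bar{\mbox{\bf B}}}$ (quasisolid in particular), and then invoke the first part of Proposition \ref{quasis} together with the density hypothesis and finally Theorem \ref{ttClarke}. That half of your proposal is fine and matches the paper.

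The gap is in your part (i), the tangential transversality itself. The paper does not prove it directly: it quotes Theorem 4.3 of \cite{BKRtt}, using that an epi-Lipschitz-like set is compactly epi-Lipschitz (Proposition 3.1 of \cite{Borw87}) and hence ``almost massive'', and that result supplies exactly the uniformity you are missing. In your direct attempt the approximants $\tilde w^A\in\hat T_A(x_0)$, $\tilde w^B\in\hat T_B(x_0)$ depend on the pair $(x^A,x^B)$, and Clarke tangency of each such vector only yields radii $\delta(\tilde w^A),\delta(\tilde w^B)$ and step bounds $\lambda(\cdot)$ depending on the vector, whereas the definition of tangential transversality requires one $\delta$ (and $M$, $\eta$) fixed before $(x^A,x^B)$ are chosen; only the sequence $t_m$ may vary. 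You correctly identify this as the main obstacle, but the proposed remedy -- splitting $v$ into a ``bulk'' component handled by $\Omega$-motion and a finite-dimensional residual handled by a finite $\varepsilon$-net -- is only sketched, and the nontrivial details are precisely where the work lies: directions of the finite-codimensional subspace are not themselves in $\Omega$ (one only has a translated relative ball $x_\Omega+r\bar{\mbox{\bf B}}_{X_1}\subset\overline{\mbox{co}}\,\Omega$, so the bulk motion introduces an offset $x_\Omega$ that must be pushed into the finite-dimensional residual), the two motions in $A$ must be composed while staying within the Clarke radii, and the uniform $\delta,\lambda$ for the residual directions must be extracted from the finite net. This programme is plausible (it is in effect a re-proof, in the epi-Lipschitz-like case, of the cited Theorem 4.3, with the finite-dimensional residual playing the role of the compact set $K$ in the compactly epi-Lipschitz definition), but as written it is a strategy, not a proof; either carry out that construction in full or, as the paper does, invoke the external tangential transversality theorem for compactly epi-Lipschitz (almost massive) sets.
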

	
	\begin{proof}
		The tangential transversality follows from Theorem 4.3 in \cite{BKRtt}, since epi-Lipschitz-like sets are compactly epi-Lipschitz (Proposition 3.1 in \cite{Borw87}) and therefore almost massive.
		
		For the normal intersection property, first we note that the set $\Omega$ from Definition \ref{ell} is obviously contained in $\hat T_A(x_0)$. Therefore, $\hat T_A(x_0)$ is finite codimensional due to Proposition \ref{finite}. As it is a closed convex cone,\\ $\hat T_A(x_0)\cap  \bar{\mbox{\bf B}}$ is finite codimensional as well. Applying Proposition \ref{finite} once again, we obtain finite codimensionality of $D:= \overline{\hat T_A(x_0)\cap \bar{\mbox{\bf B}} - \hat T_B(x_0)\cap  \bar{\mbox{\bf B}}} $. An application of Proposition \ref{quasis} and Theorem \ref{ttClarke} completes the proof.
	\end{proof}

	\begin{example}\label{CEL}
		
		We are going to construct an example for closed subsets $A$ and $B$ of the Banach space $X:=l_\infty(\mathbb{N})$, $A$ -- compactly epi-Lipschitz at $x_0 \in A \cap B$, such that $\overline{\hat T_A(x_0) -\hat T_B(x_0)} = X$ but $ N_A(x_0) + N_B(x_0)$ is not $w^*$-closed (this $\hat T_A(x_0)$ and $\hat T_B(x_0)$ are not transversal).
		
		The set $A$ will be the same as in Example 4.1 in \cite{Borw87}:
		$$A := \{x\in X \ | \ f(x) \le 0 \} \, $$
		where $f(x) := \liminf_{n\to \infty} |x_n|$. In \cite{Borw87} it is shown that $A$ is compactly epi-Lipschitz and $\hat T_A(\mathbf{0})=c_0(\mathbb{N})$.
		
		The set $B$ will be from the following lemma:
		\begin{lem}\label{conB}
			There exists a closed subspace $B$ of $l_\infty(\mathbb{N})$ such that $c_0(\mathbb{N})+B$ is dense in $l_\infty(\mathbb{N})$ but $c_0(\mathbb{N})+B \neq l_\infty(\mathbb{N})$.
		\end{lem}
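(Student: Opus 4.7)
The plan is to produce $B$ as a \emph{quasi-complement} of $c_0(\mathbb{N})$ in $l_\infty(\mathbb{N})$, i.e.\ a closed subspace satisfying $B \cap c_0(\mathbb{N}) = \{\mathbf{0}\}$ and such that $c_0(\mathbb{N}) + B$ is dense in $l_\infty(\mathbb{N})$. Once such a $B$ is in hand, the desired strict inclusion $c_0(\mathbb{N}) + B \neq l_\infty(\mathbb{N})$ will be a consequence of Phillips' classical theorem that $c_0(\mathbb{N})$ is not complemented in $l_\infty(\mathbb{N})$: for if equality held together with trivial intersection, the continuous linear bijection $c_0 \times B \to l_\infty$, $(c,b) \mapsto c+b$, would be an isomorphism by the open mapping theorem, making $c_0$ topologically complemented by $B$ in $l_\infty$ and contradicting Phillips.

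To construct the quasi-complement, I would employ a transfinite recursion. The starting point is an uncountable family in $l_\infty$ whose classes modulo $c_0$ are linearly independent and span a subspace dense in $l_\infty/c_0$. A natural source is the family of characteristic functions $\chi_{A_\alpha}$ associated with a sufficiently rich family of infinite subsets of $\mathbb{N}$, for instance an independent family of cardinality $2^{\aleph_0}$ (Sierpinski); the classes of such $\chi_{A_\alpha}$ are pairwise at distance one in $l_\infty/c_0$, reflecting the non-separability of the quotient. At each stage of the recursion one selects a lift $b_\alpha = \chi_{A_\alpha} + c_\alpha$ with $c_\alpha \in c_0$ chosen so that the augmented closed linear span continues to meet $c_0$ only at the origin. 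Setting $B := \overline{\mbox{span}}\,\{b_\alpha\}$, the density of $B + c_0$ in $l_\infty$ is verified by approximating arbitrary simple functions (which are norm-dense in $l_\infty$) by finite combinations of the $\chi_{A_\alpha}$ modulo a $c_0$-error.

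The principal obstacle is the simultaneous preservation of three requirements that pull in opposite directions throughout the transfinite construction: $B$ must be closed (so we must take norm-closures), yet $B \cap c_0$ must remain trivial (whereas norm-closures can introduce new elements of $c_0$ as Cauchy combinations whose individual components need not converge), and finally $q(B)$ must be dense in $l_\infty/c_0$ (where $q: l_\infty \to l_\infty/c_0$ is the quotient map), which demands enough ``room'' in $B$ to reach all classes. The delicate coordination of the corrections $c_\alpha$ -- or, equivalently, a carefully formulated Zorn-type maximality argument within the family of closed subspaces of $l_\infty$ having trivial intersection with $c_0$ -- is the technical heart of the proof and is where all of the combinatorial and functional-analytic work is concentrated.
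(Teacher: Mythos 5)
Your reduction is the same as the paper's: once you have a closed subspace $B$ with $c_0(\mathbb{N})\cap B=\{\mathbf{0}\}$ and $c_0(\mathbb{N})+B$ dense (a quasi-complement), the strict inclusion follows because equality would make $c_0(\mathbb{N})\oplus B=l_\infty(\mathbb{N})$ an algebraic direct sum of closed subspaces, hence a topological one by the open mapping theorem, contradicting Phillips' theorem that $c_0(\mathbb{N})$ is not complemented in $l_\infty(\mathbb{N})$. That part is correct and is exactly how the paper argues.

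The gap is in the existence of the quasi-complement, which is the real content of the lemma. The paper does not construct it; it invokes a known theorem (Theorem 5.83 in Hajek--Montesinos--Vanderwerff--Zizler, essentially Rosenthal's result that $c_0$ is quasi-complemented in $l_\infty$), whose proof is genuinely nontrivial. Your transfinite-recursion sketch does not substitute for this: you yourself state that the simultaneous preservation of closedness of $B$, triviality of $B\cap c_0$, and density of $q(B)$ in $l_\infty/c_0$ is ``the technical heart'' and you leave it unresolved, so the central claim is simply not proved. Moreover, the starting point of the sketch is already doubtful: for an independent family $\{A_\alpha\}$ the classes of the $\chi_{A_\alpha}$ modulo $c_0$ need not have dense linear span in $l_\infty/c_0\cong C(\beta\mathbb{N}\setminus\mathbb{N})$ --- density would require (via Stone--Weierstrass) something like the subalgebra generated by these characteristic functions, and the linear span of an independent family is not closed under products, i.e.\ finite linear combinations of the $\chi_{A_\alpha}$ do not approximate arbitrary simple functions modulo $c_0$. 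So either quote the quasi-complementation theorem (as the paper does) or supply a complete construction; as written, the proposal establishes only the easy half of the argument.
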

		
		\begin{proof}
			Due to Theorem 5.83 on p. 200 in \cite{Haj08} there exists a closed subspace $B$ of $l_\infty(\mathbb{N})$ such that $c_0(\mathbb{N})+B$ is dense in $l_\infty(\mathbb{N})$ and $c_0(\mathbb{N}) \cap B =  \{\mathbf{0}\}$.
			
			Let us assume that $c_0(\mathbb{N})+B = l_\infty(\mathbb{N})$. Since  $c_0(\mathbb{N}) \cap B =  \{\mathbf{0}\}$, we have that this is a direct sum: 
			\begin{equation}\label{compl}
			c_0(\mathbb{N}) \oplus B = l_\infty(\mathbb{N}) \, .
			\end{equation}
			Due to Proposition 5.3 in \cite{FA}, \eqref{compl} implies that $c_0(\mathbb{N})$ is complemented in $l_\infty(\mathbb{N})$, which is a contradiction with Theorem 5.15 in \cite{FA}.
			
		\end{proof}
		
		We have that
		$$
		\hat T_A(\mathbf{0}) -\hat T_B(\mathbf{0}) = c_0(\mathbb{N}) + B \subsetneq l_\infty(\mathbb{N})
		$$
		and $\overline{\hat T_A(\mathbf{0}) -\hat T_B(\mathbf{0})} = l_\infty(\mathbb{N})$. Therefore, $\hat T_A(\mathbf{0}) -\hat T_B(\mathbf{0})$ is not closed and due to Corollary 3 in \cite{Jam72}, this is equivalent to
		$$(\hat T_A(\mathbf{0}))^\circ + (-\hat T_B(\mathbf{0}))^\circ =  N_A(\mathbf{0}) + N_B(\mathbf{0}) $$
		being not $w^*$-closed.
	\end{example}

	\section{Sum rule for the Clarke subdifferential}
	Let $f_1: X  \rightarrow \mathbb{R}\cup \{+\infty\}$ and  $f_2: X  \rightarrow \mathbb{R}\cup \{+\infty\}$ be lower semicontinuous and proper and  $x_0 \in X$ be in $dom f_1 \cap dom f_2$. We are going to apply the results from the previous section to the closed sets
	$$C_1 := \{ (x, r_1, r_2) \in X\times \mathbb{R}\times \mathbb{R} \ | \ r_1 \ge f_1(x) \}$$
	and
	$$C_2 := \{ (x, r_1, r_2) \in X\times \mathbb{R}\times \mathbb{R} \ | \ r_2 \ge f_2(x) \} $$
	in order to obtain a sum rule for the Clarke subdifferential. This is the approach introduced by Ioffe in \cite{Ioffe84}. 
	
	\begin{thm}\label{sum_rule}
		Let the sets $C_1$ and $C_2$ have normal intersection property at $(x_0, f_1(x_0), f_2(x_0))$.
		Then
		$$\partial_C (f_1+f_2)(x_0) \subset \partial_C f_1(x_0) + \partial_C f_2(x_0) \, ,$$
		where $\partial_C$ is the Clarke subdifferential.
	\end{thm}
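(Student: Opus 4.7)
The plan is to follow the classical Ioffe reduction: translate the desired sum rule into the hypothesized normal intersection property for $C_1$ and $C_2$ at $p:=(x_0,f_1(x_0),f_2(x_0))$. I would start from the standard epigraphical characterization $\xi\in\partial_C f(x_0)$ iff $(\xi,-1)\in N_{\mathrm{epi}\,f}(x_0,f(x_0))$, so that a given $\zeta\in\partial_C(f_1+f_2)(x_0)$ corresponds to $(\zeta,-1)\in N_{\mathrm{epi}(f_1+f_2)}(x_0,(f_1+f_2)(x_0))$. The crucial intermediate claim would be $(\zeta,-1,-1)\in N_{C_1\cap C_2}(p)$.

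I would prove this by showing that the continuous linear map $\pi:X\times\mathbb{R}\times\mathbb{R}\to X\times\mathbb{R}$, $(x,r_1,r_2)\mapsto(x,r_1+r_2)$, carries $\hat T_{C_1\cap C_2}(p)$ into $\hat T_{\mathrm{epi}(f_1+f_2)}(x_0,(f_1+f_2)(x_0))$. Once this mapping property is in place, for any $(v,s_1,s_2)\in\hat T_{C_1\cap C_2}(p)$ the pairing with $(\zeta,-1,-1)$ equals the pairing of $\pi(v,s_1,s_2)=(v,s_1+s_2)$ with $(\zeta,-1)$, which is $\le 0$ because $(\zeta,-1)$ is a Clarke normal to $\mathrm{epi}(f_1+f_2)$; this is exactly the defining inequality for $(\zeta,-1,-1)\in N_{C_1\cap C_2}(p)$.

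The main obstacle is verifying this tangent-cone inclusion, which forces one to lift a nearby point $\tilde q=(\tilde x,\tilde r)\in\mathrm{epi}(f_1+f_2)$ to a genuine $q\in C_1\cap C_2$ that is still close to $p$, so that the Clarke tangent property at $p$ can be invoked. The natural lift $q:=(\tilde x,f_1(\tilde x),\tilde r-f_1(\tilde x))$ lies in $C_1\cap C_2$ (since $\tilde r\ge(f_1+f_2)(\tilde x)$ implies the second coordinate is $\ge f_2(\tilde x)$), and the key observation is that lower semicontinuity of both $f_1$ and $f_2$ controls it: given $\mu>0$, LSC yields $f_i(\tilde x)\ge f_i(x_0)-\mu$ for $\tilde x$ near $x_0$, and combined with $\tilde r\le(f_1+f_2)(x_0)+\tilde\delta$ this gives the two-sided estimate $|f_1(\tilde x)-f_1(x_0)|\le \tilde\delta+\mu$, hence $\|q-p\|$ is dominated by a constant multiple of $\tilde\delta+\mu$. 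Choosing $\tilde\delta$ and the LSC modulus $\mu$ sufficiently small produces $\|q-p\|<\delta$. Applying Definition \ref{ClarkeCone} to $C_1\cap C_2$ at $p$ with tolerance $\varepsilon:=\tilde\varepsilon/2$ delivers a point $q+tw\in C_1\cap C_2$ with $\|w-(v,s_1,s_2)\|\le\varepsilon$; pushing forward by $\pi$ gives $\tilde q+t\pi(w)\in\mathrm{epi}(f_1+f_2)$ with $\|\pi(w)-(v,s_1+s_2)\|\le 2\varepsilon=\tilde\varepsilon$, establishing tangency at $\tilde q$.

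With $(\zeta,-1,-1)\in N_{C_1\cap C_2}(p)$ in hand, the normal intersection hypothesis decomposes it as $n_1+n_2$ with $n_i\in N_{C_i}(p)$. Because $C_1$ is a cylinder in the third coordinate, one has $\hat T_{C_1}(p)=\hat T_{\mathrm{epi}\,f_1}(x_0,f_1(x_0))\times\mathbb{R}$ and therefore $N_{C_1}(p)=N_{\mathrm{epi}\,f_1}(x_0,f_1(x_0))\times\{0\}$; analogously for $C_2$. Writing $n_1=(\xi_1,-a_1,0)$ and $n_2=(\xi_2,0,-a_2)$ and matching the second and third coordinates of $(\zeta,-1,-1)$ forces $a_1=a_2=1$, so the epigraphical characterization yields $\xi_i\in\partial_C f_i(x_0)$; matching the first coordinate gives $\zeta=\xi_1+\xi_2$, which is the announced inclusion.
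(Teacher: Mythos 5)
Your proposal is correct, and it follows the same Ioffe-type reduction as the paper: identify a subgradient $\zeta\in\partial_C(f_1+f_2)(x_0)$ with the normal vector $(\zeta,-1,-1)$ to $C_1\cap C_2$ at $p:=(x_0,f_1(x_0),f_2(x_0))$, apply the hypothesized normal intersection property, and read off the decomposition from $N_{C_1}(p)=\{(\xi,s,0):(\xi,s)\in N_{\mathrm{epi}\,f_1}(x_0,f_1(x_0))\}$ and its analogue for $C_2$. The only substantive difference lies in how the key claim $(\zeta,-1,-1)\in N_{C_1\cap C_2}(p)$ is justified. The paper introduces the auxiliary set $C=\{(x,r_1,r_2)\ :\ r_1+r_2\ge f_1(x)+f_2(x)\}\supset C_1\cap C_2$, quotes Lemma 2.4 of Jourani--Thibault to obtain $\hat T_{C_1\cap C_2}(p)\subset\hat T_C(p)$ (a genuinely needed step, since Clarke tangent cones are not monotone under set inclusion), and then computes $N_C(p)$ by polarity. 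You instead prove directly from Definition \ref{ClarkeCone} that the projection $\pi(x,r_1,r_2)=(x,r_1+r_2)$ maps $\hat T_{C_1\cap C_2}(p)$ into $\hat T_{\mathrm{epi}(f_1+f_2)}(x_0,(f_1+f_2)(x_0))$, using the lift $(\tilde x,\tilde r)\mapsto(\tilde x,f_1(\tilde x),\tilde r-f_1(\tilde x))$, whose distance to $p$ you correctly control through lower semicontinuity of both $f_1$ and $f_2$ (the two-sided bound $|f_1(\tilde x)-f_1(x_0)|\le\tilde\delta+\mu$ and its counterpart for the third coordinate are exactly right), and the uniformity demanded by the definition is preserved. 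Since the paper's $C$ is precisely $\pi^{-1}(\mathrm{epi}(f_1+f_2))$, your mapping statement and the paper's two intermediate facts are equivalent; what your version buys is self-containedness --- it replaces the external lemma by a short explicit verification and makes visible exactly where lower semicontinuity of the two functions enters --- while the paper's version is shorter granted the reference and records the reusable intermediate inclusion $N_C(p)\subset N_{C_1\cap C_2}(p)$.
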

	
	\begin{proof}
		The normal intersection property of $C_1$ and $C_2$ at $(x_0, f_1(x_0), f_2(x_0))$ means that
		$$ N_{C_1\cap C_2}(x_0, f_1(x_0), f_2(x_0)) \subset N_{C_1}(x_0, f_1(x_0), f_2(x_0)) + N_{C_2}(x_0, f_1(x_0), f_2(x_0)) \, .$$
		It is direct that
		$$C_1 \cap C_2 \subset C:= \{ (x, r_1, r_2) \in X\times \mathbb{R}\times \mathbb{R} \ | \ r_1+r_2 \ge f_1(x) +  f_2(x) \} \, $$
		and using Lemma 2.4 from \cite{JouThi95} (with $g$ -- the identity, $C := C_1$, $D:=C_2$ and $B:=C$) we have that
		$$\hat T_{C_1 \cap C_2}(x_0, f_1(x_0), f_2(x_0)) \subset \hat T_{C}(x_0, f_1(x_0), f_2(x_0))\, . $$
		
		Hence,
		\begin{align}\label{incln}
		N_C(x_0, f_1(x_0), &f_2(x_0)) \subset N_{C_1\cap C_2}(x_0, f_1(x_0), f_2(x_0)) \\
		&\subset N_{C_1}(x_0, f_1(x_0), f_2(x_0)) + N_{C_2}(x_0, f_1(x_0), f_2(x_0)) \, . \nonumber
		\end{align}
		It can be easily verified that 
		$$\hat T_{C}(x_0, f_1(x_0), f_2(x_0)) = \{ (v, r_1, r_2) \in X\times \mathbb{R}\times \mathbb{R} \ | \ (v, r_1+r_2) \in \hat T_{epi (f_1+f_2)}(x_0, f_1(x_0)+ f_2(x_0)) \} \, .$$
		By polarity, we have that
		\begin{align*}
		N_{C}&(x_0, f_1(x_0), f_2(x_0)) = \hat T_{C}^\circ(x_0, f_1(x_0), f_2(x_0)) \\
		& =\{ (x^*, s_1, s_2) \in X^*\times \mathbb{R}\times \mathbb{R} \ | \ \langle x^* v\rangle + s_1 r_1 + s_2 r_2 \le 0\\
		& \quad \quad \quad \mbox{ for all } (v, r_1+r_2) \in \hat T_{epi (f_1+f_2)}(x_0, f_1(x_0)+ f_2(x_0)) \} \, .
		\end{align*}
		Therefore, due to the symmetry of $r_1$ and $r_2$, we obtain that
		$$N_{C}(x_0, f_1(x_0), f_2(x_0)) = \{ (x^*, s, s) \in X^*\times \mathbb{R}\times \mathbb{R} \ | \ (x^*, s) \in N_{epi (f_1+f_2)}(x_0, f_1(x_0)+ f_2(x_0)) \} \, .$$
		
		Hence,
		$$x^* \in \partial_C (f_1+f_2)(x_0) \iff (x^*, -1, -1) \in N_C(x_0, f_1(x_0), f_2(x_0)) $$
		and by using \eqref{incln} and that
		$$N_{C_1}(x_0, f_1(x_0), f_2(x_0)) = \{ (x^*, s_1, 0) \in X^* \times \mathbb{R}\times \mathbb{R} \ | \  (x^*, s_1) \in N_{epi f_1}(x_0, f_1(x_0)) \} $$
		and 
		$$N_{C_2}(x_0, f_1(x_0), f_2(x_0)) = \{ (x^*, 0, s_2) \in X^* \times \mathbb{R}\times \mathbb{R} \ | \  (x^*, s_2) \in N_{epi f_2}(x_0, f_2(x_0)) \} \, , $$
		the proof is complete.
		
	\end{proof}
	
	\begin{corol}
		Let the sets $C_1$ and $C_2$ be strongly tangentially transversal at $(x_0, f_1(x_0), f_2(x_0))$.
		Then
		$$\partial_C (f_1+f_2)(x_0) \subset \partial_C f_1(x_0) + \partial_C f_2(x_0) \, .$$
	\end{corol}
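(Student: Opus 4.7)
The plan is to reduce this corollary to Theorem \ref{sum_rule} by showing that the hypothesis (strong tangential transversality of $C_1$ and $C_2$) implies the hypothesis of that theorem (normal intersection property at the relevant point). This reduction is essentially a one-line invocation of the corollary stated immediately after Theorem \ref{ttClarke}, so no new ideas should be needed.

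More precisely, first I would apply the corollary following Theorem \ref{ttClarke} with $A := C_1$, $B := C_2$ and the reference point $(x_0, f_1(x_0), f_2(x_0))$. Its second conclusion gives exactly
\[
N_{C_1\cap C_2}(x_0, f_1(x_0), f_2(x_0)) \subset N_{C_1}(x_0, f_1(x_0), f_2(x_0)) + N_{C_2}(x_0, f_1(x_0), f_2(x_0)),
\]
which is the normal intersection property of $C_1$ and $C_2$ at $(x_0, f_1(x_0), f_2(x_0))$ (the weak$^*$-closedness of the right-hand side is provided by Lemma \ref{weak_star} through the proof of Theorem \ref{ttClarke}). Then I would invoke Theorem \ref{sum_rule} directly, which yields $\partial_C(f_1+f_2)(x_0) \subset \partial_C f_1(x_0) + \partial_C f_2(x_0)$ as desired.

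There is really no obstacle here: the work was already done in establishing the corollary after Theorem \ref{ttClarke} (where transversality of the Clarke tangent cones is obtained from strong tangential transversality) and in Theorem \ref{sum_rule} (where the epigraphical construction translates the normal intersection property into a subdifferential sum rule). The only thing worth checking carefully in passing is that the application of the corollary after Theorem \ref{ttClarke} is legitimate in the product space $X \times \mathbb{R} \times \mathbb{R}$, but since that corollary is stated for arbitrary Banach spaces and arbitrary closed subsets, this is automatic.
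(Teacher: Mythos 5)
Your proposal is correct and coincides with the paper's intended argument: the corollary following Theorem \ref{ttClarke} (whose hypothesis is exactly strong tangential transversality, and whose proof supplies the weak$^*$-closedness of $N_{C_1}+N_{C_2}$ via Lemma \ref{weak_star}) gives the normal intersection property of $C_1$ and $C_2$ at $(x_0,f_1(x_0),f_2(x_0))$, and Theorem \ref{sum_rule} then yields the sum rule. No gaps; the application in the product space $X\times\mathbb{R}\times\mathbb{R}$ is indeed automatic.
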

	
	\begin{corol}
		Let the sets $C_1$ and $C_2$ be tangentially transversal at $(x_0, f_1(x_0), f_2(x_0))$ and let $\hat T_{C_1}(x_0, f_1(x_0), f_2(x_0))$ and $\hat T_{C_2}(x_0, f_1(x_0), f_2(x_0)) $ be transversal.
		Then
		$$\partial_C (f_1+f_2)(x_0) \subset \partial_C f_1(x_0) + \partial_C f_2(x_0) \, .$$
	\end{corol}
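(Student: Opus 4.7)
The plan is to chain together the two main results already established earlier in the excerpt. By hypothesis $C_1$ and $C_2$ are tangentially transversal at $(x_0, f_1(x_0), f_2(x_0))$, and the Clarke tangent cones $\hat T_{C_1}$ and $\hat T_{C_2}$ at that point are transversal in the sense of the definition preceding Lemma \ref{weak_star}. These are precisely the two hypotheses of Theorem \ref{ttClarke}, so applying that theorem yields
$$N_{C_1 \cap C_2}(x_0, f_1(x_0), f_2(x_0)) \subset N_{C_1}(x_0, f_1(x_0), f_2(x_0)) + N_{C_2}(x_0, f_1(x_0), f_2(x_0)),$$
which is exactly the normal intersection property for $C_1$ and $C_2$ at the reference point.

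Once the normal intersection property is secured, the hypothesis of Theorem \ref{sum_rule} is satisfied, and Theorem \ref{sum_rule} gives the desired inclusion
$$\partial_C(f_1 + f_2)(x_0) \subset \partial_C f_1(x_0) + \partial_C f_2(x_0).$$

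Since the corollary is a direct composition of Theorem \ref{ttClarke} and Theorem \ref{sum_rule}, there is no substantive obstacle: the only thing to double-check is that the two notions of ``transversality'' in the corollary's hypothesis match verbatim the hypotheses of Theorem \ref{ttClarke} (tangential transversality of the sets, plus transversality of the Clarke tangent cones in the infinite-dimensional sense defined just before Lemma \ref{weak_star}), which they do. Thus the proof reduces to a single sentence citing these two results.
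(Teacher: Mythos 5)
Your proposal is correct and is exactly the paper's intended argument: the corollary is stated without a separate proof precisely because it is the direct composition of Theorem \ref{ttClarke} (which under these two transversality hypotheses yields the normal intersection property for $C_1$ and $C_2$) with Theorem \ref{sum_rule}. The only point worth noting is that the paper's definition of normal intersection property also asks that $N_{C_1}(x_0,f_1(x_0),f_2(x_0))+N_{C_2}(x_0,f_1(x_0),f_2(x_0))$ be weak$^*$-closed; this is guaranteed here by Lemma \ref{weak_star} thanks to the transversality of the Clarke tangent cones, so your reduction goes through verbatim.
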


	\begin{definition}[Borwein and Strojwas, \cite{BS85}]
		The function $f: X  \rightarrow [-\infty, +\infty\}$ is said to be Lipschitz-like at $x_0$, if $epi f$ is epi-Lipschitz-like at  $(x_0, f(x_0))$.
	\end{definition}
	
	\begin{corol}\label{sumLL}
		Let $f_1$ be Lipschitz-like at $x_0$ and
		\begin{equation}\label{qual}
		\{\partial_C^\infty f_1(x_0) \} \cap \{-\partial_C^\infty f_2(x_0) \} = \{ \mathbf{0} \} \, ,
		\end{equation}
		where  $\partial_C^\infty$ is the Clarke singular subdifferential. 
		Then 
		$$\partial_C (f_1+f_2)(x_0) \subset \partial_C f_1(x_0) + \partial_C f_2(x_0) \, .$$
	\end{corol}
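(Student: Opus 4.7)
The strategy is to reduce the claim to Theorem \ref{sum_rule} by establishing the normal intersection property for the epigraphical sets $C_1$ and $C_2$ at the reference point $(x_0,f_1(x_0),f_2(x_0))$, and to derive the latter from Corollary \ref{ell} applied to $A:=C_1$, $B:=C_2$ inside the ambient Banach space $X\times\mathbb{R}^2$. For this, I plan to verify two ingredients: (a) $C_1$ is epi-Lipschitz-like at the reference point, and (b) $\hat T_{C_1}(x_0,f_1(x_0),f_2(x_0))-\hat T_{C_2}(x_0,f_1(x_0),f_2(x_0))$ is dense in $X\times\mathbb{R}^2$.

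For (a), unfolding the Lipschitz-like assumption on $f_1$ produces $\delta,\eta>0$ and a convex finite codimensional set $\Omega\subset X\times\mathbb{R}$ witnessing that $epi\, f_1$ is epi-Lipschitz-like at $(x_0,f_1(x_0))$. Since $C_1=epi\, f_1\times\mathbb{R}$, the product $\Omega\times\mathbb{R}$ plays the corresponding role for $C_1$ at $(x_0,f_1(x_0),f_2(x_0))$, and a short verification---via Proposition \ref{finite} applied to the trivially finite codimensional subset $\Omega\times\{0\}\subset\Omega\times\mathbb{R}$, or by direct inspection of closed spans and closed convex hulls---shows it remains finite codimensional in $X\times\mathbb{R}^2$.

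For (b), both tangent cones are closed convex cones, so by Hahn-Banach the density of their (convex) difference is equivalent to the triviality of the polar
$$(\hat T_{C_1}-\hat T_{C_2})^\circ=\hat T_{C_1}^\circ\cap(-\hat T_{C_2})^\circ=N_{C_1}\cap(-N_{C_2}).$$
Using the component-wise representation $N_{C_1}=\{(x^*,s_1,0):(x^*,s_1)\in N_{epi\, f_1}(x_0,f_1(x_0))\}$ and the analogous one for $N_{C_2}$ already employed in the proof of Theorem \ref{sum_rule}, any $(y^*,a,b)$ in the intersection is forced to satisfy $a=b=0$, $(y^*,0)\in N_{epi\, f_1}(x_0,f_1(x_0))$ and $(-y^*,0)\in N_{epi\, f_2}(x_0,f_2(x_0))$. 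Identifying horizontal Clarke normals of an epigraph with the Clarke singular subdifferential, this reads $y^*\in\partial_C^\infty f_1(x_0)\cap(-\partial_C^\infty f_2(x_0))$, which equals $\{\mathbf{0}\}$ by the qualification condition \eqref{qual}.

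Combining (a) and (b), Corollary \ref{ell} yields the normal intersection property for $C_1$ and $C_2$, and then Theorem \ref{sum_rule} delivers $\partial_C(f_1+f_2)(x_0)\subset\partial_C f_1(x_0)+\partial_C f_2(x_0)$. The step requiring the most care is the conversion of the qualification \eqref{qual} into the topological density of $\hat T_{C_1}-\hat T_{C_2}$ through the identification $\{x^*:(x^*,0)\in N_{epi\, f}(x_0,f(x_0))\}=\partial_C^\infty f(x_0)$; once that identification and the product-of-sets bookkeeping are in place, the rest is routine polarity.
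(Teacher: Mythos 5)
Your proposal is correct and takes essentially the same route as the paper: show that $C_1$ is epi-Lipschitz-like at the reference point, convert the qualification condition \eqref{qual} into density of $\hat T_{C_1}-\hat T_{C_2}$ in $X\times\mathbb{R}\times\mathbb{R}$, and conclude via Corollary \ref{ell} and Theorem \ref{sum_rule}. The only difference is that the paper obtains the equivalence of \eqref{qual} with \eqref{tang} by citing Lemma 4.5 of \cite{BKRtt}, whereas you prove the needed implication directly through polarity of the difference of the tangent cones and the identification of horizontal epigraph normals with the Clarke singular subdifferential, which is a sound, self-contained substitute for that citation.
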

	
	\begin{proof}
		We have that $C_1$ is epi-Lipschitz-like at $(x_0, f_1(x_0), f_2(x_0))$, since $f_1$ is Lipschitz-like at $x_0$. The qualification condition \eqref{qual} is equivalent to 
		\begin{equation}\label{tang}
		\overline{\hat T_{C_1}(x_0, f_1(x_0), f_2(x_0))- \hat T_{C_2}(x_0, f_1(x_0), f_2(x_0))} = X\times \mathbb{R}\times \mathbb{R} \, .
		\end{equation}
		due to Lemma 4.5 in \cite{BKRtt}.
		
		We can use Corollary \ref{ell} to obtain normal intersection property of $C_1$ and $C_2$ at $(x_0, f_1(x_0), f_2(x_0))$. The corollary follows from Theorem \ref{sum_rule}. 
	\end{proof}

\end{document}